\documentclass[12pt, a4paper, reqno]{amsart}

  \usepackage{lmodern}

\usepackage{eucal}

\usepackage{hyperref}
\usepackage{amssymb,upref,enumerate}
\usepackage{tikz}
\usepackage{amsmath,amssymb,yfonts,amscd,amsthm,amsxtra,mathrsfs,mathabx}
\usepackage{latexsym}
\usepackage{esint}
\usepackage{mathtools}

\def\XXint#1#2#3{{\setbox0=\hbox{$#1{#2#3}{\int}$} 
\vcenter{\hbox{$#2#3$}}\kern-.5\wd0}}

\newcommand{\ra}{\rightarrow}

\newcommand{\bey}{\begin{eqnarray*}}
\newcommand{\eey}{\end{eqnarray*}}
\newcommand{\ba}{\begin{align}}
\newcommand{\ea}{\end{align}}
\newcommand{\bea}{\begin{align*}}
\newcommand{\ena}{\end{align*}}
\newcommand{\be}{\begin{equation}}
\newcommand{\ee}{\end{equation}}
\newcommand{\R}{\mathbb R}
\newcommand{\Z}{\mathbb Z}

\newcommand{\C}{\mathbb C}

\newcommand{\Sp}{{{\mathbb S}^{d-1}} }
\newcommand{\W}{\textup W }

\newcommand{\ep}{\varepsilon}
\newcommand{\dss}{\displaystyle}
\newcommand{\bc}{\begin{center}}
\newcommand{\ec}{\end{center}}

\newcommand{\vp}{\varphi}

\newcommand{\al}{\alpha}

\newcommand{\dint}{ \ {\mathrm d}}
\newcommand{\dx}{ \ {\mathrm dx}}
\newcommand{\dt}{ \ {\mathrm dt}}
\newcommand{\dy}{ \ {\mathrm dy}}

\newcommand{\Lup}{\textup{L}}
\newcommand{\Int}{\textup{I}}
\newcommand{\M}{\textup{M}}
\newcommand{\T}{\textup{T}}
\newcommand{\Con}{\textup{C}}

\DeclareMathOperator{\Div}{\textup{div}}

\newtheorem{theorem}{Theorem}[section]
\newtheorem{lemma}[theorem]{Lemma}
\newtheorem{corollary}[theorem]{Corollary}
\newtheorem{proposition}[theorem]{Proposition}

\theoremstyle{definition}

\theoremstyle{remark}
\newtheorem{remark}[theorem]{Remark}

\begin{document}


\subjclass[2020]{Primary 42B20, 42B25; Secondary 26D10, 46E30, 46E35, 47G40, 30E20}

\keywords{Rough singular integrals, rough maximal operators, Riesz potentials, subrepresentation formula, Sobolev inequalities, Poincar\'e inequality on the sphere, Cauchy transform, Beurling--Ahlfors transform}

\title[]{Pointwise Riesz potential bounds for rough singular integrals}
\date{\today}

\author{Kabe Moen}

\address{Kabe Moen \\
 Department of Mathematics \\
 University of Alabama \\
 Tuscaloosa, AL 35487, USA}

\email{kabe.moen@ua.edu}

\author{Carlos P\'erez Moreno}

\address{Carlos P\'erez Moreno\\
 Department of Mathematics \\
 University of the Basque Country \\
 Ikerbasque and BCAM \\
 Bilbao, Spain}
 \thanks{C. P\'erez is supported by the Spanish government through grant PID2023-146646NB-I00 and the Severo Ochoa accreditation CEX2021-001142-S, both at BCAM; by the Basque Government through grant IT1875-26 at the University of the Basque Country; and by the BERC program 2022--2025 at BCAM.}

\email{cperez@bcamath.org}

 \begin{abstract} We give a complete characterization of the kernels $\Omega\in \Lup^1(\Sp)$ for which the rough singular integral $\T_\Omega$ and its associated maximal operator $\M_\Omega$ satisfy pointwise bounds in terms of $\Int_1(|\nabla \cdot|)$ and $\M_1(|\nabla \cdot|)$, respectively, where $\Int_1$ is the Riesz potential and $\M_1$ is the fractional maximal operator. We show that these bounds hold if and only if $\Omega$ belongs to the homogeneous dual Sobolev space $\dot \W^{-1,\infty}(\Sp)$. This characterization yields both a PDE characterization and a geometric characterization of the pointwise bounds.  In the Lebesgue scale, we show that, for $d\geq 3$, every rough operator associated to a mean-zero kernel $\Omega\in\Lup^{d-1,\infty}(\Sp)$ satisfies these pointwise bounds, and that the exponent $d-1$ is sharp. In dimension $d=2$, we use complex-analytic methods to establish a new pointwise bound in terms of the complex derivatives $\bar\partial f$ and $\partial f$, which are valid for every mean-zero $\Omega\in\Lup^1(\mathbb S^1)$. As a consequence, we obtain new Sobolev bounds for rough singular integrals in the plane.\end{abstract}
 
\maketitle

\bigskip
\section{Introduction}

For $d\geq 2$, the classical subrepresentation formula states that
\begin{equation}\label{subrep}
|f(x)|\leq c_d\,\Int_1(|\nabla f|)(x),
\qquad f\in \Con_c^\infty(\R^d),\ x\in\R^d,
\end{equation}
where, for $0<\al<d$, $\Int_\al$ is the Riesz potential
\[
\Int_\al f(x)=\int_{\R^d}\frac{f(y)}{|x-y|^{d-\al}}\dy,
\qquad
(\Int_\al f)\hat{\ }(\xi)=c_{\al,d}|\xi|^{-\al}\hat f(\xi).
\]
The associated fractional maximal function
\[
\M_\al f(x)=\sup_{Q\ni x}\ell(Q)^\al\fint_Q |f(y)|\dy,
\qquad 0\leq\al<d,
\]
satisfies $\M_\al f\leq c_{\al,d}\,\Int_\al(|f|)$ pointwise for $\al>0$. For $\al=0$ we write $\M=\M_0$ for the Hardy--Littlewood maximal operator.

The subrepresentation formula \eqref{subrep} is a basic tool in harmonic analysis and PDE, closely tied both to the mapping properties of $\Int_1$ and to Sobolev inequalities (see \cite{KLV}). For example, combining \eqref{subrep} with the Hardy--Littlewood--Sobolev inequality $\Int_1:\Lup^p(\R^d)\ra \Lup^{p^*}(\R^d)$, $1<p<d$, yields the Gagliardo--Nirenberg--Sobolev inequality
\begin{equation}\label{GNS}
\|f\|_{\Lup^{p^*}(\R^d)}\leq C\|\nabla f\|_{\Lup^p(\R^d)},
\qquad 1\leq p<d,\quad p^*=\frac{pd}{d-p},\quad f\in \Con_c^\infty(\R^d).
\end{equation}
At the endpoint $p=1$, $1^*=\frac{d}{d-1}$, the operator $\Int_1$ is only of weak type, $\Int_1:\Lup^1(\R^d)\ra\Lup^{\frac{d}{d-1},\infty}(\R^d)$, so \eqref{subrep} alone gives only the weak-type version of \eqref{GNS}. Nevertheless, the strong inequality involving the gradient \eqref{GNS} with $p=1$ still holds; it can be recovered from the weak-type estimate by the truncation method of Maz'ya (see \cite{Mazya}). Moreover, it was shown in \cite{BMOPR} that \eqref{subrep} is equivalent to an endpoint version of \eqref{GNS} with respect to general measures.

Let $\dot \W^{1,p}(\R^d)$ denote the homogeneous Sobolev space, defined as the completion of $\Con_c^\infty(\R^d)$ with respect to the norm $\|\nabla f\|_{\Lup^p(\R^d)}$. For $1\leq p<d$, \eqref{GNS} yields the Sobolev embedding $\dot \W^{1,p}(\R^d)\hookrightarrow \Lup^{p^*}(\R^d)$ and allows us to identify this space with
\[
\dot \W^{1,p}(\R^d)
=
\left\{
f\in \Lup^{p^*}(\R^d):
\|\nabla f\|_{\Lup^p(\R^d)}<\infty
\right\},
\qquad
\|f\|_{\dot \W^{1,p}(\R^d)}
=
\|\nabla f\|_{\Lup^p(\R^d)},
\]
where $\nabla f$ denotes the distributional gradient.

In \cite{HMP,HMP2}, the subrepresentation formula \eqref{subrep} was extended to operators:
\begin{equation}\label{opersubrep}
|\T f(x)|\leq c_\T\,\Int_1(|\nabla f|)(x),
\qquad f\in \Con_c^\infty(\R^d),\ x\in\R^d,
\end{equation}
where $\T$ is defined on $\Con_c^\infty(\R^d)$. The subrepresentation formula \eqref{subrep} corresponds to $\T=\textup{Id}$, but many classical operators in harmonic analysis also satisfy \eqref{opersubrep}. For example, the $\Lup^r$ variant of the Hardy--Littlewood maximal operator
\[
\M_{\Lup^r}f(x)=\sup_{Q\ni x}\Big(\fint_Q |f(y)|^r\dy\Big)^{1/r},
\qquad r>0,
\]
satisfies \eqref{opersubrep} if and only if $0<r\leq \frac{d}{d-1}$ \cite{HMP,HMP2}. Maximal operators along surfaces also satisfy \eqref{opersubrep}; for instance, it is shown in \cite{HMP2} that the spherical maximal operator
\[
\M_{\Sp}f(x)=\sup_{t>0}\fint_{\partial B(x,t)}|f|\,\dint\mathcal H^{d-1}
\]
satisfies $\M_{\Sp}f\leq c_d\,\Int_1(|\nabla f|)$ (see also \cite{HajlaszLiu}).

Lerner \cite{Lern} recently characterized the Calder\'on--Zygmund operators that satisfy \eqref{opersubrep}. Roughly speaking, a Calder\'on--Zygmund operator is an operator bounded on $\Lup^2(\R^d)$ that is given, away from the diagonal, by integration against a kernel satisfying a size and smoothness condition; see \cite{Gra14,Duo01} for the precise definition. Such operators extend to bounded operators from $\Lup^\infty(\R^d)$ into $\textup{BMO}$, so in particular $\T1$ is well defined as an element of $\textup{BMO}$.
\begin{theorem}[\cite{Lern}] Suppose $\T$ is a Calder\'on--Zygmund operator. Then
\begin{equation}\label{CZOchar}
|\T f(x)|\leq c_\T\,\Int_1(|\nabla f|)(x),\qquad f\in \Con_c^\infty(\R^d),\ x\in\R^d,
\end{equation}
if and only if $\T1\in \Lup^\infty(\R^d)$.
\end{theorem}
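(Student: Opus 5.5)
We prove the two implications separately. For the sufficiency, assume $\T1\in\Lup^\infty(\R^d)$ and fix $x\in\R^d$ and $f\in\Con_c^\infty(\R^d)$. We pass to the difference $f-f(x)$. Formally $\T f(x)=\T(f-f(x))(x)+f(x)\,\T1(x)$. The second term is bounded by $\|\T1\|_{\Lup^\infty}\,|f(x)|\le c\,\Int_1(|\nabla f|)(x)$ by the classical subrepresentation formula \eqref{subrep}.

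For the first term we use the kernel representation. The function $g=f-f(x)$ vanishes at $x$ and satisfies $|g(y)|\le |x-y|\,\|\nabla f\|_\infty$ near $x$, so
\[
\T g(x)=\int K(x,y)\,(f(y)-f(x))\dy
\]
converges absolutely near the diagonal. One justifies this by approximating $1$ with $\chi_{B(x,R)}$ and using that $\T1$ is defined modulo constants via the truncated kernel, with cancellations at infinity. We then bound the integral using $|K(x,y)|\le C|x-y|^{-d}$ together with the pointwise estimate
\[
|f(y)-f(x)|\le c\big(\Int_1(|\nabla f|\chi_{B(x,2|x-y|)})(x)+\Int_1(|\nabla f|\chi_{B(x,2|x-y|)})(y)\big).
\]
Averaging in $y$ over dyadic annuli $2^k\le |x-y|<2^{k+1}$ gives a sum over $k$ of local potentials. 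Naively this sum carries a logarithmic loss. To avoid it, we exploit the kernel smoothness through a telescoping or Calder\'on-type decomposition: write $f(y)-f(x)=\int_0^1\nabla f(x+t(y-x))\cdot(y-x)\,dt$, change variables $z=x+t(y-x)$, and integrate in $t$ first. This yields $\int |\nabla f(z)|\,\widetilde K(x,z)\,dz$ with $\widetilde K(x,z)\lesssim\int_{|y-x|\ge|z-x|}|K(x,y)|\,|y-x|\cdot|y-x|^{d-1}|z-x|^{-(d-1)}\,\ldots$, and the radial integration of $|x-y|^{-d}$ beyond $|z-x|$ must be controlled. The logarithmic divergence at infinity is handled because $f$ has compact support and the constant has been subtracted. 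More carefully, we split $\T f(x)=\T(f\chi_{B})$ near $x$ and use the smoothness of $K$ away from $x$, in Lerner's style: the far part pairs $K(x,\cdot)-K(x',\cdot)$ with $f$. I expect this far-field bookkeeping, namely converting the size bound into an $\Int_1$ bound without a log, to be the main obstacle. The approach I would try first is a sparse/dyadic decomposition, bounding $|\T f(x)|$ by $\sum_Q \ell(Q)\fint_Q|\nabla f|$ over cubes containing $x$, which is $\lesssim \Int_1(|\nabla f|)(x)$.

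For the necessity, assume \eqref{CZOchar}. We test it on $f=\varphi_R$, where $\varphi_R=1$ on $B(0,R)$, $\varphi_R$ is supported in $B(0,2R)$, and $|\nabla\varphi_R|\le 1/R$. Then $\Int_1(|\nabla\varphi_R|)(x)\lesssim R^{-1}\int_{R<|y|<2R}|x-y|^{1-d}\dy\lesssim 1$ uniformly for $|x|\le R/2$. Hence $|\T\varphi_R(x)|\le C$ there, uniformly in $R$. Since $\T\varphi_R$ converges to $\T1$ modulo constants, in $\textup{BMO}$ and locally, and the normalisation can be chosen compatible with these $\varphi_R$, we obtain $\T1\in\Lup^\infty$.
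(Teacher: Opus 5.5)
The paper does not prove this theorem; it is quoted from Lerner \cite{Lern}, so your proposal has to stand on its own.

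Your necessity argument is essentially sound once the vague ``convergence in BMO'' is replaced by the local statement you actually need. For $|x|<R/4$ one has $\T1(x)=\T\varphi_R(x)+c_R+O(1)$, where $c_R$ is a constant and the $O(1)$ is bounded by the kernel-smoothness constant uniformly in $R$. Together with $|\T\varphi_R|\le C$ on $B(0,R/2)$, this bounds the oscillation of $\T1$ on every $B(0,R/4)$ by a fixed constant, so $\T1-c\in\Lup^\infty$.

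The sufficiency direction has a genuine gap: the core estimate is never proved, and the route you describe cannot work. Passing to $f-f(x)$ uses $\T1\in\Lup^\infty$ only once, at the single point $x$. You then bound $\int K(x,y)(f(y)-f(x))\,dy$ through $|K(x,y)|\le C|x-y|^{-d}$, and the logarithm this loses is real. Take $x=0$ and $f_N\in\Con_c^\infty(\R^d)$ with $f_N=0$ on $B(0,\frac12)$, $f_N=1$ on $\{1\le|y|\le2^N\}$, and $f_N=0$ outside $B(0,2^{N+1})$. Require $|\nabla f_N|\lesssim1$ on the inner transition shell and $|\nabla f_N|\lesssim2^{-N}$ on the outer one. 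Then $\Int_1(|\nabla f_N|)(0)\lesssim1$ uniformly in $N$, whereas $f_N(0)=0$ and
\[
\int_{\R^d}\frac{|f_N(y)-f_N(0)|}{|y|^d}\,dy\ \ge\ \int_{1<|y|<2^N}\frac{dy}{|y|^d}=\omega_{d-1}N\log 2 .
\]
For such $f_N$ the term $f(x)\T1(x)$ vanishes, and neither the smoothness of $K$ nor the compact support of $f$ can recover the loss. What is needed is cancellation of $K$ over the shells $\{2^{k-1}<|x-y|<2^k\}$ at every scale between $1$ and $2^N$. Separately, $\T1(x)$ and $\T(f-f(x))(x)$ have no pointwise meaning as written: $\T1$ is a BMO class modulo constants, and $\T g$ is only defined a.e.

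The missing idea is to use $\T1\in\Lup^\infty$ at every scale, through the lemma $\sup_B\|\T\chi_B\|_{\Lup^\infty(\frac12B)}<\infty$, the supremum taken over all balls $B$. It follows from the definition of $\T1$: on $\frac12B$,
\[
\T1=\T\chi_B+\int_{\R^d\setminus B}(K(\cdot,y)-K(x_B,y))\,dy+c_B .
\]
The integral is bounded by the smoothness condition, and $|c_B|$ is controlled by averaging $|\T\chi_B|$ over $\frac12B$ and using $\Lup^2$-boundedness. Now let $B_k=B(x,2^k)$ and $g_k=(f-f_{B_k})\chi_{B_k}$, with $\textup{supp}\,f\subset B_M$, and telescope
\[
f=f_{B_M}\chi_{B_M}+g_{-N}+\sum_{k=-N+1}^{M}\Big[(f-f_{B_k})\chi_{B_k\setminus B_{k-1}}+(f_{B_{k-1}}-f_{B_k})\chi_{B_{k-1}}\Big].
\]
On $\frac12B_{-N}$ the four kinds of terms are controlled as follows.
\begin{enumerate}
\item Each shell term is off-support, so the size condition and the Poincar\'e inequality bound it by $C2^k\fint_{B_k}|\nabla f|$.
\item Each constant term is at most $|f_{B_{k-1}}-f_{B_k}|\,\|\T\chi_{B_{k-1}}\|_{\Lup^\infty(\frac12B_{k-1})}\lesssim 2^k\fint_{B_k}|\nabla f|$. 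This is where the hypothesis enters.
\item The top term is $O(|f_{B_M}|)\to0$.
\item By $\Lup^2$-boundedness, $\fint_{\frac12B_{-N}}|\T g_{-N}|\lesssim\|g_{-N}\|_{\Lup^\infty}\lesssim 2^{-N}\|\nabla f\|_{\Lup^\infty}$.
\end{enumerate}
Letting $M\to\infty$, then $N\to\infty$, and using Lebesgue differentiation gives, for a.e. $x$,
\[
|\T f(x)|\lesssim\sum_{k\in\Z}2^k\fint_{B(x,2^k)}|\nabla f|\lesssim\Int_1(|\nabla f|)(x).
\]
This is exactly the ``sum over cubes containing $x$'' you guessed at the end. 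However, your proposal contains no decomposition that produces it and no mechanism by which $\T1\in\Lup^\infty$ controls the intermediate scales.
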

By the $\T1$ theorem of David and Journ\'e \cite{DJ}, an operator with such a kernel is bounded on $\Lup^2(\R^d)$ if and only if $\T1,\T^*1\in\textup{BMO}$ and $\T$ has the weak boundedness property. Thus \eqref{CZOchar} singles out those Calder\'on--Zygmund operators for which $\T1$ lies in the subspace $\Lup^\infty(\R^d)$ of $\textup{BMO}$.

Our main interest will be rough operators associated to $\Omega\in\Lup^1(\Sp)$ with mean zero. Writing $y'=y/|y|$, for $f\in \Con_c^\infty(\R^d)$ define
\[
\T_\Omega f(x)=\textup{p.v.}\int_{\R^d}\frac{\Omega(y')}{|y|^d}\,f(x-y)\dy=\lim_{\ep\ra 0^+}\int_{|y|>\ep}\frac{\Omega(y')}{|y|^d}\,f(x-y)\dy,
\]
and
\[
\M_\Omega f(x)=\sup_{r>0}\left|\fint_{B(0,r)} \Omega(y')f(x-y)\dy\right|.
\]
The principal value defining $\T_\Omega f$ exists for every $f\in \Con_c^\infty(\R^d)$ provided $\int_\Sp\Omega=0$; without this condition, the truncated integrals diverge like $\log(1/\ep)\int_\Sp\Omega$ (see the proof of Theorem~\ref{maincharthm}). By contrast, $\M_\Omega f$ is finite for every $f\in \Con_c^\infty(\R^d)$ and every $\Omega\in \Lup^1(\Sp)$, but the pointwise bounds below again require $\Omega$ to have mean zero. When $\int_\Sp\Omega=0$, the operator $\M_\Omega$ has cancellation; in particular, it is not the maximal operator defined with respect to $|\Omega|$.

The operators $\T_\Omega$ and $\M_\Omega$ have a long and rich history, for which we refer the reader to the books of Grafakos and Duoandikoetxea \cite{Gra14,Duo01}. Classical results on the boundedness of $\T_\Omega$ and $\M_\Omega$ can be found in \cite{CZ56,Chr88,CRdF88,Hof88,See96,GS98}, and these operators remain an active object of study \cite{HRT17,CACDPO17,Ler19,LPRR19,DPHL20,Lai25}. One of the difficulties is that their kernels lack smoothness, so they do not fit into the realm of Calder\'on--Zygmund operators. In particular, rough singular integrals are not covered by \eqref{CZOchar}; note also that $\T_\Omega 1=0$.

In \cite{HMP} it was shown that if $\Omega\in\Lup^{d,\infty}(\Sp)$ has mean zero, then
\begin{equation}\label{ptwiserough}
|\T_\Omega f(x)|\leq c_d\|\Omega\|_{\Lup^{d,\infty}(\Sp)}\,\Int_1(|\nabla f|)(x),
\qquad f\in \Con_c^\infty(\R^d),\ x\in \R^d.
\end{equation}
The maximal operator $\M_\Omega$ was later studied in \cite{HMP2}, where it was shown that, under the same assumption,
\begin{equation}\label{ptwiseroughmax}
\M_\Omega f(x)\leq c_d\|\Omega\|_{\Lup^{d,\infty}(\Sp)}\,\M_1(|\nabla f|)(x),
\qquad f\in \Con_c^\infty(\R^d),\ x\in \R^d.
\end{equation}
The case $\Omega\in\Lup^r(\Sp)$ with $1<r<d$ is treated in \cite{HMP2}, where pointwise bounds with a different right-hand side are obtained by means of sparse fractional operators adapted to $\Lup^s$ averages, $s>1$. We also refer the reader to \cite{CMM,CMMCor}, where bounds for $\Omega\in\Lup^r(\Sp)$, $1<r<d$, are established with a right-hand side involving Morrey spaces; see also \cite{CMM2} for further developments in this vein.
\subsection*{Main results}

The purpose of this work is to characterize the kernels $\Omega$ for which the pointwise inequalities \eqref{ptwiserough} and \eqref{ptwiseroughmax} hold. Our first result gives an improved sufficient condition that lowers the integrability index from $d$ to $d-1$, and we show that $d-1$ is sharp on the Lebesgue scale.

\begin{theorem}\label{mainthm}
Suppose $d\geq 3$ and $\Omega\in\Lup^{d-1,\infty}(\Sp)$ has mean zero. Then
\begin{equation}\label{newptwiserough}
|\T_\Omega f(x)|\leq c_d\|\Omega\|_{\Lup^{d-1,\infty}(\Sp)}\,\Int_1(|\nabla f|)(x),
\end{equation}
and
\begin{equation}\label{newptwisemax}
\M_\Omega f(x)\leq c_d\|\Omega\|_{\Lup^{d-1,\infty}(\Sp)}\,\M_1(|\nabla f|)(x),
\end{equation}
where both inequalities hold for $f\in \Con_c^\infty(\R^d)$ and $x\in \R^d$. The result is sharp: for each $1\leq r<d-1$ there exist a mean-zero $\Omega\in\Lup^r(\Sp)$ and functions $f_\delta\in \Con_c^\infty(\R^d)$, $0<\delta\ll 1$, such that
\[
\lim_{\delta\to 0^+}\frac{|\T_\Omega f_\delta(0)|}{\Int_1(|\nabla f_\delta|)(0)}=\infty \qquad \text{and} \qquad \lim_{\delta\to 0^+}\frac{\M_\Omega f_\delta(0)}{\M_1(|\nabla f_\delta|)(0)}=\infty;
\]
in particular, \eqref{newptwiserough} and \eqref{newptwisemax} fail for such $\Omega$.
\end{theorem}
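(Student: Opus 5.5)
The plan is to reduce both pointwise bounds to a divergence representation of $\Omega$ on the sphere, namely $\Omega=\Div_{\Sp}F$ with $F$ a bounded tangential vector field, and to obtain $F$ from a Sobolev inequality on $\Sp$ by duality.

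\emph{Step 1 (divergence representation implies the bounds).} Suppose $\Omega=\Div_{\Sp}F$ weakly, with $\|F\|_{\Lup^\infty(\Sp)}\le C\|\Omega\|_{\Lup^{d-1,\infty}(\Sp)}$. Write $\T_\Omega f(x)=\lim_{\ep\to0}\int_\ep^\infty\int_\Sp \Omega(\theta)f(x-\rho\theta)\,d\theta\,\frac{d\rho}{\rho}$. For each fixed $\rho$, integrate by parts on the sphere. This gives $-\int_\Sp F(\theta)\cdot\nabla_\theta[f(x-\rho\theta)]\,d\theta$, and the tangential gradient is bounded by $\rho|\nabla f(x-\rho\theta)|$. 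The factor $\rho$ cancels $d\rho/\rho$ against the polar measure $\rho^{d-1}d\rho$, so
\[
|\T_\Omega f(x)|\le\|F\|_\infty\int_{\R^d}\frac{|\nabla f(x-y)|}{|y|^{d-1}}\,dy=\|F\|_\infty\,\Int_1(|\nabla f|)(x).
\]
The same computation handles $\M_\Omega$:
\[
\Big|\fint_{B(0,r)}\Omega(y')f(x-y)\,dy\Big|\le c\,r^{-d}\int_0^r\rho^{d}\int_\Sp|F||\nabla f(x-\rho\theta)|\,d\theta\,d\rho\le c\|F\|_\infty\, r\fint_{B(x,r)}|\nabla f|,
\]
which is at most $c\|F\|_\infty\M_1(|\nabla f|)(x)$.

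\emph{Step 2 (existence of $F$; the main step).} By Hahn--Banach and the duality $\Lup^1(\Sp)^*=\Lup^\infty(\Sp)$ applied to the map $\nabla_{\Sp}\varphi\mapsto\int\Omega\varphi$, it suffices to prove
\[
\Big|\int_\Sp\Omega\varphi\Big|\le C\|\Omega\|_{\Lup^{d-1,\infty}}\|\nabla_{\Sp}\varphi\|_{\Lup^1(\Sp)}\qquad\text{for all }\varphi\in \Con^\infty(\Sp).
\]
Because $\Omega$ has mean zero, we may replace $\varphi$ by $\varphi-\varphi_{\Sp}$. The sphere is a compact manifold of dimension $n=d-1\ge2$; here $d\ge3$ is used. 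On it, the endpoint Sobolev--Poincar\'e inequality in its Lorentz-refined form (Alvino; proved via coarea and the isoperimetric inequality on $\Sp$) gives
\[
\|\varphi-\varphi_{\Sp}\|_{\Lup^{\frac{d-1}{d-2},1}(\Sp)}\le C\|\nabla_{\Sp}\varphi\|_{\Lup^1(\Sp)}.
\]
H\"older's inequality for the dual Lorentz pair $\Lup^{d-1,\infty}$ and $\Lup^{\frac{d-1}{d-2},1}$ then closes the argument. The main obstacle is establishing this Lorentz-space Poincar\'e inequality on the sphere. I would obtain it by the Maz'ya truncation argument applied to level sets, together with the isoperimetric profile of $\Sp$ for sets of measure at most half the sphere.

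\emph{Step 3 (sharpness).} Fix $e\in\Sp$ and $r<d-1$, and choose $a$ with $1<a<\frac{d-1}{r}$. Set $\Omega(\theta)=|\theta-e|^{-a}-c_a$, where $c_a$ is chosen so that $\Omega$ has mean zero; then $\Omega\in\Lup^r(\Sp)$. Let $\varphi_\delta$ be a smooth bump equal to $1$ on the cap $\{|\theta+e|<\delta\}$, supported in the double cap, with $|\nabla\varphi_\delta|\lesssim\delta^{-1}$. (The cap is taken around $-e$ to absorb the reflection $f(x-y)$.) Fix $\psi\in \Con_c^\infty((1,2))$ with $\psi\ge0$, and set $f_\delta(y)=\varphi_\delta(y')\psi(|y|)$. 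Then:
\begin{enumerate}[(i)]
\item $|\T_\Omega f_\delta(0)|\approx\int_\Sp\Omega(\theta)\varphi_\delta(-\theta)\,d\theta\gtrsim\delta^{d-1-a}$;
\item $\M_\Omega f_\delta(0)\gtrsim\delta^{d-1-a}$, by taking $r=2$;
\item $\Int_1(|\nabla f_\delta|)(0)$ and $\M_1(|\nabla f_\delta|)(0)$ are both $\lesssim\|\nabla_{\Sp}\varphi_\delta\|_1+\|\varphi_\delta\|_1\lesssim\delta^{d-2}$.
\end{enumerate}
Both ratios are therefore $\gtrsim\delta^{1-a}\to\infty$ as $\delta\to0^+$, since $a>1$.
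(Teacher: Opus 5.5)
Your proposal is correct, and its core is the paper's. The upper bounds use the Lorentz--Poincar\'e inequality on $\Sp$ together with Lorentz--H\"older and the mean-zero condition. The sharpness example is the same kernel $|\theta-e|^{-a}$, $1<a<\frac{d-1}{r}$, tested against a $\delta$-cap bump times a radial cutoff.

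The one structural difference is that you use Hahn--Banach to write $\Omega=\Div_\Sp F$ with $\|F\|_{\Lup^\infty}\lesssim\|\Omega\|_{\Lup^{d-1,\infty}(\Sp)}$, and then integrate by parts on each sphere $|y|=\rho$. The paper instead applies the dual inequality directly to the smooth spherical functions $u_\ep(\theta)=\int_\ep^\infty f(-t\theta)\,dt/t$ and $w_s$. It uses $\|\nabla_\Sp u_\ep\|_{\Lup^1(\Sp)}\le\Int_1(|\nabla f|)(0)$ and the analogous bound for $w_s$ (Lemma~\ref{lem:gradbounds}). By Fubini the two are the same computation, so the Hahn--Banach step is not needed for Theorem~\ref{mainthm}: the inequality of your Step~2 can be applied to $u_\ep$ itself. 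What your route buys is that it exhibits explicitly, inside the proof, that every mean-zero $\Omega\in\Lup^{d-1,\infty}(\Sp)$ is the spherical divergence of a bounded field. In effect you run (1)$\Rightarrow$(5)$\Rightarrow$(2),(3) of Theorem~\ref{maincharthm}, where the paper runs (1)$\Rightarrow$(2),(3) directly; the paper's route is shorter and avoids the non-constructive step.

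Two minor points. First, the paper simply cites the Lorentz--Poincar\'e inequality on $\Sp$. Your coarea/isoperimetric sketch is a sound self-contained proof: the $\Lup^{p,1}$ norm equals $p\int_0^\infty\mu(\{|u|>t\})^{1/p}\,dt$, and after subtracting a median the spherical isoperimetric inequality (for sets of at most half the sphere) controls this level by level. Second, you normalize by subtracting a constant, so you need $\delta$ small enough that the $O(\delta^{d-1})$ contribution of the constant is dominated by $\delta^{d-1-a}$. The paper avoids this by subtracting a multiple of the indicator of the antipodal cap, so that $\Omega=\Omega_0\ge0$ exactly near the bump.
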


\begin{remark}
The proof of Theorem~\ref{mainthm} gives more: it controls the maximal truncated operator
\[
  \T^\star_\Omega f(x)
  = \sup_{\ep>0} \bigl|\T^\ep_\Omega f(x)\bigr|
  = \sup_{\ep>0} \left| \int_{|y|>\ep} \frac{\Omega(y')}{|y|^d}\, f(x-y)\dy \right|
\]
pointwise, namely
\[
  \T^\star_\Omega f(x) \le c_d\|\Omega\|_{\Lup^{d-1,\infty}(\Sp)} \Int_1\bigl(|\nabla f|\bigr)(x).
\]
\end{remark}

The main new ingredient in the proof of Theorem~\ref{mainthm} is a Poincar\'e inequality on the sphere. For $d\geq3$, the Lorentz--Poincar\'e inequality on the $(d-1)$-dimensional sphere (Lemma~\ref{lem:lorentzpoincare}) states that
\[
\|\vp-\vp_{\Sp}\|_{\Lup^{\frac{d-1}{d-2},1}(\Sp)}\leq C_d\int_{\Sp}|\nabla_\Sp\vp|\,\dint\mathcal H^{d-1},
\qquad \vp_\Sp=\fint_\Sp\vp\,\dint\mathcal H^{d-1},
\]
where $\nabla_\Sp$ is the \emph{spherical gradient} (see Section~\ref{prelim}). By contrast, the proof in \cite{HMP} relies on the Lorentz--Poincar\'e inequality on cubes in $\R^d$, which is originally due to Alvino \cite{Alvino},
\[
\|f-f_Q\|_{\Lup^{\frac{d}{d-1},1}(Q,\frac{\dx}{|Q|})}\leq C_d\,\ell(Q)\fint_Q|\nabla f(x)|\dx.
\]
The gain in the exponent in Theorem~\ref{mainthm} reflects the drop in dimension: the cancellation of $\Omega$ is exploited only in the angular variable.

For $d=2$, every mean-zero $\Omega\in\Lup^1(\mathbb S^1)$ gives rise to a pointwise inequality. We prove the planar case with complex-analytic methods, which allow us to work with complex-valued $f\in\Con^\infty_c(\C)$ and the derivatives $\bar{\partial}=\frac12(\partial_x+i\partial_y)$ and ${\partial}=\frac12(\partial_x-i\partial_y)$.

\begin{theorem}\label{mainthmplane}
Suppose $\Omega\in\Lup^{1}(\mathbb S^1)$ has mean zero. Then
\begin{equation}\label{newptwiseroughplane}
|\T_\Omega f(z)|\leq \|\Omega\|_{\Lup^{1}(\mathbb S^1)}\,\min\big\{\Int_1(|\bar{\partial}f|)(z),\,\Int_1(|\partial f|)(z)\big\},
\qquad f\in \Con_c^\infty(\C),\ z\in \C.
\end{equation}
\end{theorem}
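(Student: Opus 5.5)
The plan is to integrate by parts in the angular variable, after writing $\Omega$ as the derivative of a bounded periodic function, and then to trade the resulting $\partial_\theta$ for $\bar\partial$ (respectively $\partial$) using the radial derivative. Fix $z\in\C$ and write $y=re^{i\theta}$, so that
\[
\T^\ep_\Omega f(z)=\int_\ep^\infty\int_0^{2\pi}\Omega(\theta)\,f(z-re^{i\theta})\,\dint\theta\,\frac{\dint r}{r}.
\]
Since $\int_0^{2\pi}\Omega=0$, the function $\Phi(\theta)=\int_0^\theta\Omega-c$ is absolutely continuous and $2\pi$-periodic, with $\Phi'=\Omega$ a.e. I choose the constant $c$ so that $\int_0^{2\pi}\Phi=0$. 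The first key step is the bound $\|\Phi\|_\infty\le\frac12\|\Omega\|_{\Lup^1(\mathbb S^1)}$. Because $\Omega$ has mean zero, $\int\Omega_+=\int\Omega_-=\frac12\|\Omega\|_{\Lup^1}$, so any increment $\Phi(\theta)-\Phi(\theta')$ is at most $\frac12\|\Omega\|_{\Lup^1}$ in absolute value. A mean-zero continuous function vanishes somewhere, so $|\Phi|$ is bounded by its oscillation, which gives the claim.

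Next I integrate by parts in $\theta$; periodicity kills the boundary terms, giving
\[
\T^\ep_\Omega f(z)=-\int_\ep^\infty\int_0^{2\pi}\Phi(\theta)\,\partial_\theta\big[f(z-re^{i\theta})\big]\,\dint\theta\,\frac{\dint r}{r}.
\]
By the chain rule with $w=z-re^{i\theta}$,
\[
\partial_\theta f=-ire^{i\theta}\,\partial f+ire^{-i\theta}\,\bar\partial f,\qquad
\partial_r f=-e^{i\theta}\,\partial f-e^{-i\theta}\,\bar\partial f .
\]
Eliminating $\partial f$ yields the identity
\[
\partial_\theta f=2ire^{-i\theta}\,\bar\partial f(z-re^{i\theta})+ir\,\partial_r\big[f(z-re^{i\theta})\big].
\]
Substituting this splits $\T^\ep_\Omega f(z)$ into two terms.

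For the second term the factor $r$ cancels $\frac{1}{r}$, and integrating in $r$ gives $-i\int_0^{2\pi}\Phi(\theta)f(z-\ep e^{i\theta})\,\dint\theta$, using that $f$ has compact support. This is the step where the normalization of $\Phi$ matters. As $\ep\to0$ this tends to $-if(z)\int\Phi=0$ by continuity of $f$, and that is exactly why I chose $\int\Phi=0$. What remains is
\[
\T_\Omega f(z)=-2i\int_{\C}\Phi(w')\,\frac{\overline{w'}\,\bar\partial f(z-w)}{|w|}\,\dint A(w).
\]
This integral is absolutely convergent since $\bar\partial f$ is bounded with compact support and $|w|^{-1}$ is locally integrable, so the limit in $\ep$ is justified by dominated convergence. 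Taking absolute values then gives $|\T_\Omega f(z)|\le2\|\Phi\|_\infty\Int_1(|\bar\partial f|)(z)\le\|\Omega\|_{\Lup^1}\Int_1(|\bar\partial f|)(z)$, with the Riesz potential normalized with no constant, as in the paper.

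For the $\partial f$ bound I would repeat the computation, eliminating $\bar\partial f$ instead. This gives $\partial_\theta f=-2ire^{i\theta}\partial f-ir\,\partial_r f$, and the same argument applies (equivalently, apply the first case to $\bar f$). I expect no serious obstacle. The points needing care are the sharp constant $\frac12$ for $\Phi$, which gives the constant $1$ in the statement, and justifying integration by parts with only $\Phi$ absolutely continuous, which holds since $f$ is smooth in $\theta$.
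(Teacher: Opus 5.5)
Your argument is sound. It produces exactly the paper's truncated identity (Lemma~\ref{truncid}) and the twisted Cauchy formula $\T_\Omega f(z)=-2i\int_{\C}\frac{V(w/|w|)}{w}\,\bar\partial f(z-w)\dint A(w)$; note that $\overline{w'}/|w|=1/w$. You reach it by a different, more elementary route. The paper builds $G(w)=-2iV(w/|w|)/w$, verifies $\bar\partial G=\Omega(w/|w|)/|w|^2$, and applies the complex Green formula on the annulus $\{\ep\le|w|\le R\}$. That step requires $G\in\Con^1$, so the paper first treats mean-zero trigonometric polynomials and then passes to general $\Omega$ via Fej\'er means, taking the limit term by term. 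Your two one-dimensional integrations by parts (in $\theta$, then in $r$) are the polar-coordinate unpacking of that Green formula. Since the angular one needs only absolute continuity of $\Phi$, you handle every $\Omega\in\Lup^1(\mathbb S^1)$ directly, with no smoothing or approximation. What the paper's formulation buys is the conceptual picture: $\T_\Omega$ is a Cauchy-type operator whose kernel is $1/w$ twisted by the bounded angular factor $V(w/|w|)$, which makes the $\Int_1$ bound transparent.

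One step needs repair. In Section~\ref{plane} the paper allows complex-valued $\Omega$ (the Beurling--Ahlfors kernel $-\bar\zeta^2/\pi$ is an example), but your proof of $\|\Phi\|_{\Lup^\infty}\le\frac12\|\Omega\|_{\Lup^1(\mathbb S^1)}$ works only for real $\Omega$. The parts $\Omega_\pm$ are undefined otherwise, and a continuous complex function with mean zero need not vanish anywhere (take $e^{i\theta}$). Splitting $\Omega$ into real and imaginary parts would lose the sharp constant $1$. The fix is Lemma~\ref{primitiveV}. Write an increment $\Phi(\theta)-\Phi(\theta')$ both as $\int_{\theta'}^{\theta}\Omega$ and as minus the integral over the complementary arc, so it is at most $\min\{a,b\}\le\frac{a+b}{2}=\frac12\|\Omega\|_{\Lup^1(\mathbb S^1)}$, where $a,b$ are the masses of $|\Omega|$ on the two arcs. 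Then use $\Phi(\theta)=\frac1{2\pi}\int_0^{2\pi}\big(\Phi(\theta)-\Phi(\psi)\big)\dint\psi$ in place of a zero of $\Phi$. Alternatively, keep your real argument and apply it to $\mathrm{Re}(\lambda\Omega)$, whose centered primitive is $\mathrm{Re}(\lambda\Phi)$, with $|\lambda|=1$ chosen so that $\lambda\Phi(\theta)=|\Phi(\theta)|$. Likewise, your parenthetical ``apply the first case to $\bar f$'' requires conjugating $\Omega$ as well, since $\overline{\T_\Omega f}=\T_{\bar\Omega}\bar f$; your direct elimination of $\bar\partial f$ is fine as written. Finally, there is a harmless sign slip: $\int_\ep^\infty\partial_r[f(z-re^{i\theta})]\dint r=-f(z-\ep e^{i\theta})$, so the boundary term is $+i\int_0^{2\pi}\Phi(\theta)f(z-\ep e^{i\theta})\dint\theta$. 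It tends to $0$ either way.
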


The proof realizes $\T_\Omega$ as a twisted Cauchy transform (Proposition~\ref{twistCauchyprop}). For real-valued $f$, \eqref{newptwiseroughplane} is a subrepresentation formula with the gradient, since then $|\bar\partial f|=|\partial f|=\frac12|\nabla f|$. For complex-valued $f$ we measure the total derivative $Df$ in the operator norm, so that $|Df|=|\partial f|+|\bar\partial f|$, and let $\dot\W^{1,p}(\C)$ be the completion of $\Con_c^\infty(\C)$ with respect to $\|Df\|_{\Lup^p(\C)}$.

Theorem~\ref{mainthmplane} yields Sobolev bounds that do not follow from Lebesgue-space bounds. Whenever $\T_\Omega$ is bounded on $\Lup^{p^*}(\R^d)$, a Sobolev bound is immediate from \eqref{GNS}:
\[
\|\T_\Omega f\|_{\Lup^{p^*}(\R^d)}\leq C\|f\|_{\Lup^{p^*}(\R^d)}\leq C'\|\nabla f\|_{\Lup^p(\R^d)},\qquad 1<p<d.
\]
This is the situation in Theorem~\ref{mainthm}: for $d\geq3$, the condition $\Omega\in\Lup^{d-1,\infty}(\Sp)$ is strong enough to guarantee that $\T_\Omega$ is bounded on $\Lup^p(\R^d)$ for all $1<p<\infty$ (see \cite[Thm.~4.12]{Duo01}), so \eqref{newptwiserough} gives no new norm inequalities.

For general $\Omega\in\Lup^1(\Sp)$, however, $\T_\Omega$ need not be bounded on any $\Lup^p(\R^d)$. Weiss and Zygmund \cite{WZ65} constructed a mean-zero $\Omega\in\Lup^1(\Sp)$ for which $\T_\Omega$ is unbounded on $\Lup^2(\R^d)$, and hence, by duality and interpolation, unbounded on $\Lup^p(\R^d)$, for any $1<p<\infty$ (see also \cite[Thm.~1]{GS99}). Even in the plane, the situation is delicate: boundedness of $\T_\Omega$ on some $\Lup^p(\R^2)$ does not necessarily extend beyond a neighborhood of $p=2$ \cite{Hon11}, and $\T_\Omega$ may be bounded on $\Lup^2(\R^2)$ while $\T^\star_\Omega$ is unbounded \cite{Hon10}.

This is what makes Theorem \ref{mainthmplane} in the planar case special. For general mean-zero $\Omega\in\Lup^1(\mathbb S^1)$, Sobolev bounds cannot be obtained by combining Lebesgue-space bounds with \eqref{GNS}, yet the pointwise inequality \eqref{newptwiseroughplane} bypasses the Lebesgue theory entirely and yields genuinely new Sobolev bounds for \emph{every} such $\Omega$.

\begin{corollary}\label{boundedplane}
Suppose $\Omega\in\Lup^{1}(\mathbb S^1)$ has mean zero and $1<p<2$. Then
\begin{equation}\label{LpSob}
\|\T_\Omega f\|_{\Lup^{\frac{2p}{2-p}}(\C)}\leq C_p\|\Omega\|_{\Lup^1(\mathbb S^1)}\|Df\|_{\Lup^p(\C)},\qquad f\in\Con_c^\infty(\C),
\end{equation}
so $\T_\Omega$ extends to a bounded operator $\dot\W^{1,p}(\C)\ra\Lup^{\frac{2p}{2-p}}(\C)$.
\end{corollary}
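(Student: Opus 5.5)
The plan is to combine the pointwise bound of Theorem~\ref{mainthmplane} with the Hardy--Littlewood--Sobolev inequality in the plane, and then extend by density. First, for $f\in\Con_c^\infty(\C)$ and every $z\in\C$, Theorem~\ref{mainthmplane} gives
\[
|\T_\Omega f(z)|\leq \|\Omega\|_{\Lup^1(\mathbb S^1)}\,\Int_1(|\bar\partial f|)(z)\leq \|\Omega\|_{\Lup^1(\mathbb S^1)}\,\Int_1(|Df|)(z),
\]
where the second inequality holds because $|\bar\partial f|\leq |\partial f|+|\bar\partial f|=|Df|$ pointwise and $\Int_1$ has a positive kernel, so it is monotone on nonnegative functions. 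Either half of the minimum in \eqref{newptwiseroughplane} would suffice here; we do not need the minimum itself.

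Second, we apply the Hardy--Littlewood--Sobolev inequality in dimension $d=2$ with $\al=1$. It says that $\Int_1:\Lup^p(\R^2)\ra\Lup^{q}(\R^2)$ is bounded for $1<p<2$, where $\frac1q=\frac1p-\frac12$, that is, $q=\frac{2p}{2-p}$. Taking $\Lup^{\frac{2p}{2-p}}$ norms in the pointwise bound then yields \eqref{LpSob} with $C_p$ equal to the HLS constant. We must use the strong-type inequality, and this is exactly why $p=1$ is excluded: at $p=1$ the HLS estimate is only weak type.

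Third, for the extension, \eqref{LpSob} shows that $\T_\Omega$ is a bounded linear map from $\Con_c^\infty(\C)$, equipped with the seminorm $\|Df\|_{\Lup^p}$, into $\Lup^{\frac{2p}{2-p}}(\C)$. Since $\dot\W^{1,p}(\C)$ is by definition the completion of this space, $\T_\Omega$ extends uniquely by continuity to a bounded operator $\dot\W^{1,p}(\C)\ra\Lup^{\frac{2p}{2-p}}(\C)$ with the same norm bound.

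There is one point to check. If $Df=0$ for some $f\in\Con_c^\infty(\C)$, then $f=0$ because it is compactly supported, so the seminorm is a genuine norm on the dense class and the extension is well defined. No step is a real obstacle: all of the analytic work is in Theorem~\ref{mainthmplane}, and the corollary is a formal consequence of it.
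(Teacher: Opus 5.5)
Your proposal is correct and follows the paper's proof: you combine the pointwise bound $|\T_\Omega f|\leq\|\Omega\|_{\Lup^1(\mathbb S^1)}\Int_1(|\bar\partial f|)$ from Theorem~\ref{mainthmplane}, the domination $|\bar\partial f|\leq|Df|$, the strong-type Hardy--Littlewood--Sobolev bound $\Int_1:\Lup^p(\C)\ra\Lup^{\frac{2p}{2-p}}(\C)$, and extension by density. The only cosmetic difference is that you use monotonicity of $\Int_1$ before applying HLS, whereas the paper applies HLS first and then uses $\|\bar\partial f\|_{\Lup^p(\C)}\leq\|Df\|_{\Lup^p(\C)}$.
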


Nothing is lost in stating \eqref{LpSob} with $Df$: for $1<p<\infty$, $\|\bar\partial f\|_{\Lup^p(\C)}\simeq\|\partial f\|_{\Lup^p(\C)}\simeq\|Df\|_{\Lup^p(\C)}$. This follows from the identities $\mathcal{B}(\bar\partial f)=\partial f$ and $\mathcal B^{-1}(\partial f)=\bar\partial f$ for the Beurling--Ahlfors transform
\[
\mathcal{B}f(z)=-\frac1\pi\,\textup{p.v.}\int_{\C}\frac{f(w)}{(z-w)^2}\,\dint A(w),
\]
which is $\T_\Omega$ with $\Omega(\zeta)=-\bar\zeta^{2}/\pi$ and is bounded on $\Lup^p(\C)$ (see \cite{AABetal26} for the sharp norm), together with $\mathcal B^{-1}=\mathcal B^*$.

For $p=1$ the situation is different: $\mathcal B$ is only of weak type $(1,1)$, and neither of $\|\partial f\|_{\Lup^1(\C)}$ and $\|\bar\partial f\|_{\Lup^1(\C)}$ is controlled by the other \cite{Orn62}. Let $\dot\W^{1,1}_{\bar\partial}(\C)$ and $\dot\W^{1,1}_{\partial}(\C)$ be the completions of $\Con_c^\infty(\C)$ with respect to $\|\bar\partial f\|_{\Lup^1(\C)}$ and $\|\partial f\|_{\Lup^1(\C)}$. Since $\|Df\|_{\Lup^1(\C)}=\|\bar\partial f\|_{\Lup^1(\C)}+\|\partial f\|_{\Lup^1(\C)}$, we have $\dot\W^{1,1}(\C)=\dot\W^{1,1}_{\bar\partial}(\C)\cap\dot\W^{1,1}_{\partial}(\C)$, and neither of the two spaces contains the other. In particular, $\dot\W^{1,1}(\C)$ is a proper subspace of each of them, and both are proper subspaces of $\dot\W^{1,1}_{\bar\partial}(\C)+\dot\W^{1,1}_{\partial}(\C)$ (see Section \ref{plane}). At the endpoint, \eqref{newptwiseroughplane} gives a bound on this larger space.

\begin{corollary}\label{boundedplaneweak}
Suppose $\Omega\in\Lup^{1}(\mathbb S^1)$ has mean zero. Then
\begin{equation}\label{weakSob}
\|\T_\Omega f\|_{\Lup^{2,\infty}(\C)}\leq C\|\Omega\|_{\Lup^1(\mathbb S^1)}\min\big\{\|\bar\partial f\|_{\Lup^1(\C)},\,\|\partial f\|_{\Lup^1(\C)}\big\},\qquad f\in\Con_c^\infty(\C),
\end{equation}
so $\T_\Omega$ extends to a bounded operator $\dot\W^{1,1}_{\bar\partial}(\C)+\dot\W^{1,1}_{\partial}(\C)\ra\Lup^{2,\infty}(\C)$.
\end{corollary}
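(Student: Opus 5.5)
The estimate \eqref{weakSob} will follow by composing the pointwise bound of Theorem~\ref{mainthmplane} with the weak-type endpoint of the Hardy--Littlewood--Sobolev inequality in the plane. Fix $f\in\Con_c^\infty(\C)$. By \eqref{newptwiseroughplane} we have, for every $z\in\C$,
\[
|\T_\Omega f(z)|\leq \|\Omega\|_{\Lup^1(\mathbb S^1)}\,\Int_1(|\bar\partial f|)(z)
\quad\text{and}\quad
|\T_\Omega f(z)|\leq \|\Omega\|_{\Lup^1(\mathbb S^1)}\,\Int_1(|\partial f|)(z).
\]
In dimension $d=2$ with $\al=1$, the Riesz potential satisfies $\Int_1:\Lup^1(\R^2)\ra\Lup^{2,\infty}(\R^2)$. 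This holds because $|x|^{-1}\in\Lup^{2,\infty}(\R^2)$, and convolution of an $\Lup^1$ function with an $\Lup^{2,\infty}$ kernel lands in $\Lup^{2,\infty}$; that space is normable, so Minkowski's inequality applies. Since $\Lup^{2,\infty}$ quasinorms are monotone under pointwise domination, applying this to each of the two bounds gives
\[
\|\T_\Omega f\|_{\Lup^{2,\infty}(\C)}\leq C\|\Omega\|_{\Lup^1(\mathbb S^1)}\|\bar\partial f\|_{\Lup^1(\C)}
\quad\text{and}\quad
\|\T_\Omega f\|_{\Lup^{2,\infty}(\C)}\leq C\|\Omega\|_{\Lup^1(\mathbb S^1)}\|\partial f\|_{\Lup^1(\C)}.
\]
Taking the smaller of the two right-hand sides is exactly \eqref{weakSob}.

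For the extension statement, we first extend $\T_\Omega$ separately to each space. The first inequality above says that $\T_\Omega$ is bounded from $(\Con_c^\infty(\C),\|\bar\partial\cdot\|_{\Lup^1})$ into the complete space $\Lup^{2,\infty}(\C)$. It therefore extends uniquely, by density, to a bounded operator $\dot\W^{1,1}_{\bar\partial}(\C)\ra\Lup^{2,\infty}(\C)$. In the same way, the second inequality gives a bounded extension $\dot\W^{1,1}_{\partial}(\C)\ra\Lup^{2,\infty}(\C)$.

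The next step is to check that the two extensions agree on the intersection, so that setting $\T_\Omega(g+h)=\T_\Omega g+\T_\Omega h$ is well defined on the sum. This is the one point that needs care; everything before it is routine. To make sense of the intersection, realize both completions inside a common ambient space. One natural choice is distributions modulo constants, via an $\Lup^{2,\infty}$-type embedding coming from the subrepresentation formula together with the Cauchy transform identity $f=\mathcal C(\bar\partial f)$. Consistency on the intersection should then follow from a density argument: $\Con_c^\infty(\C)$ is dense in $\dot\W^{1,1}(\C)=\dot\W^{1,1}_{\bar\partial}(\C)\cap\dot\W^{1,1}_{\partial}(\C)$ in the norm $\|Df\|_{\Lup^1}$, and on that dense class the two extensions coincide with $\T_\Omega$ itself.

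It is simpler, and presumably what is intended, to define the operator on the sum space directly. For $u$ in the sum, set $\T_\Omega u=\T_\Omega g+\T_\Omega h$ for any decomposition $u=g+h$ with $g\in\dot\W^{1,1}_{\bar\partial}(\C)$ and $h\in\dot\W^{1,1}_{\partial}(\C)$. Two decompositions of the same $u$ differ by an element of the intersection, on which the extensions agree by the previous paragraph, so the definition is independent of the decomposition. Boundedness then holds with respect to the sum norm
\[
\inf_{u=g+h}\Big(\|\bar\partial g\|_{\Lup^1(\C)}+\|\partial h\|_{\Lup^1(\C)}\Big),
\]
using the quasi-triangle inequality in $\Lup^{2,\infty}(\C)$.
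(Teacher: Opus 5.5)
Your proposal is correct and follows the paper's proof essentially step for step. Both arguments combine the pointwise bound \eqref{newptwiseroughplane} with the endpoint Hardy--Littlewood--Sobolev inequality, extend by density separately to $\dot\W^{1,1}_{\bar\partial}(\C)$ and $\dot\W^{1,1}_{\partial}(\C)$, and show the two extensions agree on the intersection $\dot\W^{1,1}(\C)$ by density of $\Con_c^\infty(\C)$ in the $\|Df\|_{\Lup^1}$ norm. Both then finish with the quasi-triangle inequality and an infimum over decompositions. The only cosmetic difference is that the paper places both completions directly inside $\Lup^{2,\infty}(\C)$ via the Cauchy--Pompeiu embedding \eqref{onesidedemb}, so no quotient by constants is needed.
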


\begin{remark} The space $\Lup^{2,\infty}(\C)$ in \eqref{weakSob} cannot be replaced by $\Lup^2(\C)$, even for the identity. By the Cauchy--Pompeiu formula (see \cite{AIM09}) and its conjugate,
\[
f(z)=\frac1\pi\int_{\C}\frac{\bar\partial f(w)}{z-w}\,\dint A(w)=\frac1\pi\int_{\C}\frac{\partial f(w)}{\overline{z-w}}\,\dint A(w),\qquad f\in\Con_c^\infty(\C),
\]
so $|f|\leq\frac1\pi\min\{\Int_1(|\bar\partial f|),\Int_1(|\partial f|)\}$, and hence $\dot\W^{1,1}_{\bar\partial}(\C)+\dot\W^{1,1}_\partial(\C)\hookrightarrow\Lup^{2,\infty}(\C)$. However, while the Gagliardo--Nirenberg inequality gives $\|f\|_{\Lup^2(\C)}\leq C\|Df\|_{\Lup^1(\C)}$, the estimates $\|f\|_{\Lup^2(\C)}\leq C\|\bar\partial f\|_{\Lup^1(\C)}$ and $\|f\|_{\Lup^2(\C)}\leq C\|\partial f\|_{\Lup^1(\C)}$ both fail. This is an instance of Van Schaftingen's theorem \cite{VS13}: for an elliptic first-order operator $A(D)$, the estimate $\|f\|_{\Lup^{\frac{n}{n-1}}(\R^n)}\leq C\|A(D)f\|_{\Lup^1(\R^n)}$ holds if and only if $A(D)$ is \emph{canceling}. The full derivative $D=(\partial,\bar\partial)$ is canceling, while $\partial$ and $\bar\partial$ alone are not.
\end{remark}

There is a complete characterization of the pointwise inequalities in terms of a dual Sobolev space on the sphere. For $\Omega\in\Lup^1(\Sp)$ we say $\Omega \in \dot \W^{-1,\infty}(\Sp)$ if
\begin{multline}\label{dualnorm}
\|\Omega\|_{\dot \W^{-1,\infty}(\Sp)}\\=\sup\left\{\Big|\int_\Sp\Omega\,\vp\,\dint\mathcal H^{d-1}\Big|:\ \vp\in \Con^\infty(\Sp),\ \|\nabla_\Sp\vp\|_{\Lup^1(\Sp)}\leq 1\right\}<\infty.
\end{multline}
That is, \eqref{dualnorm} is the norm of $\Omega$ as a linear functional on $\dot \W^{1,1}(\Sp)$. Given a vector field $ \mathbf{V}$ on $\Sp$, we define the \emph{spherical divergence} $\Div_\Sp  \mathbf{V}$ via the weak formulation 
\begin{equation}\label{weakdiv}
\int_\Sp \Div_\Sp  \mathbf{V}\,\vp\dint\mathcal H^{d-1}=-\int_\Sp  \mathbf{V}\cdot \nabla_\Sp \vp \dint\mathcal H^{d-1}\qquad\text{for all } \vp\in\Con^\infty(\Sp)
\end{equation} (see Section~\ref{prelim} for more on the $\Div_{\Sp}$ operator).

\begin{theorem}\label{maincharthm}
Suppose $d\geq 2$ and $\Omega\in\Lup^1(\Sp)$. The following are equivalent:
\begin{enumerate}
\item $\Omega\in \dot \W^{-1,\infty}(\Sp)$;
\item $\M_\Omega f(x)\leq c\,\M_1(|\nabla f|)(x)$ for all $f\in \Con_c^\infty(\R^d)$ and $x\in \R^d$;
\item $|\T_\Omega f(x)|\leq c\,\Int_1(|\nabla f|)(x)$ for all $f\in \Con_c^\infty(\R^d)$ and $x\in \R^d$;
\item $\dss\Big|\int_E \Omega\dint\mathcal H^{d-1}\Big|\leq C\,\mathcal H^{d-2}(\partial E)$ for all open $E\subseteq \Sp$ with smooth boundary;
\item $\Omega=\Div_\Sp \mathbf{V}$ for some $ \mathbf{V}\in \Lup^\infty(\Sp,\R^d)$.
\end{enumerate}
Moreover, the best constants in (2) and (3) are comparable to $\|\Omega\|_{\dot \W^{-1,\infty}(\Sp)}$, with constants depending only on $d$, the best constant in (4) equals $\|\Omega\|_{\dot \W^{-1,\infty}(\Sp)}$, and
\[
\|\Omega\|_{\dot \W^{-1,\infty}(\Sp)}=\min\big\{\| \mathbf{V}\|_{\Lup^\infty(\Sp,\R^d)}:\ \Omega=\Div_\Sp \mathbf{V}\big\}.
\]
\end{theorem}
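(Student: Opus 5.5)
We prove $(1)\Leftrightarrow(5)\Leftrightarrow(4)$ by duality and geometric measure theory on $\Sp$, then $(1)\Rightarrow(3)$ and $(1)\Rightarrow(2)$ by a polar-coordinate computation, and finally $(3)\Rightarrow(1)$ and $(2)\Rightarrow(1)$ by testing the operators on suitable functions at $x=0$.

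\textbf{Duality: $(1)\Leftrightarrow(5)$.} If $\Omega=\Div_\Sp\mathbf V$, then by \eqref{weakdiv}
\[
\Big|\int_\Sp\Omega\vp\Big|=\Big|\int_\Sp\mathbf V\cdot\nabla_\Sp\vp\Big|\le\|\mathbf V\|_\infty\|\nabla_\Sp\vp\|_{\Lup^1}.
\]
This gives $\|\Omega\|_{\dot\W^{-1,\infty}}\le\inf\|\mathbf V\|_\infty$.

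For the converse, consider the subspace $\{\nabla_\Sp\vp:\vp\in\Con^\infty(\Sp)\}\subset\Lup^1(\Sp,\R^d)$ and the functional $\nabla_\Sp\vp\mapsto-\int\Omega\vp$. It is well defined: $\nabla_\Sp\vp=0$ forces $\vp$ to be constant, and then choosing $\vp=c$ with $|c|\to\infty$ in \eqref{dualnorm} shows $\int\Omega=0$. The functional has norm at most $\|\Omega\|_{\dot\W^{-1,\infty}}$. Extend it by Hahn--Banach and represent it by some $\mathbf V\in\Lup^\infty(\Sp,\R^d)$; this $\mathbf V$ has the same norm and $\Div_\Sp\mathbf V=\Omega$. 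So the minimum is attained and equals the dual norm.

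\textbf{Geometry: $(1)\Leftrightarrow(4)$.} Apply the coarea formula on $\Sp$ to $\vp$ and write $\vp=\int_0^\infty\chi_{\{\vp>t\}}\,dt-\int_{-\infty}^0\chi_{\{\vp<t\}}\,dt$. Then (4) gives
\[
\Big|\int\Omega\vp\Big|\le C\int_\R\mathcal H^{d-2}(\partial\{\vp>t\})\,dt=C\|\nabla_\Sp\vp\|_{\Lup^1},
\]
using that almost every level set is smooth by Sard's theorem. Mean zero is again used to discard the constant.

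Conversely, approximate $\chi_E$ by smooth $\vp_k$ with $\|\nabla_\Sp\vp_k\|_{\Lup^1}\to\mathcal H^{d-2}(\partial E)$ and $\vp_k\to\chi_E$ in $\Lup^1$ and boundedly a.e. Dominated convergence against $\Omega\in\Lup^1$ then gives (4) with $C=\|\Omega\|_{\dot\W^{-1,\infty}}$. Hence the two constants are equal.

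\textbf{Analysis: $(5)\Rightarrow(3)$ and $(5)\Rightarrow(2)$.} Fix $x=0$ and use polar coordinates. Since $\int\Omega=0$, subtracting $f(0)$ is harmless, so
\[
\T^\ep_\Omega f(0)=\int_\ep^\infty\frac1r\int_\Sp\Omega(\theta)\,f(-r\theta)\,d\theta\,dr .
\]
For each $r$, apply (5) to $\vp_r(\theta)=f(-r\theta)$. Since $|\nabla_\Sp\vp_r(\theta)|\le r|\nabla f(-r\theta)|$, the inner integral is bounded by
\[
\|\mathbf V\|_\infty\, r\int_\Sp|\nabla f(-r\theta)|\,d\theta .
\]
Inserting this gives
\[
|\T^\ep_\Omega f(0)|\le\|\mathbf V\|_\infty\int_0^\infty\!\!\int_\Sp|\nabla f(-r\theta)|\,d\theta\,dr=\|\mathbf V\|_\infty\Int_1(|\nabla f|)(0),
\]
uniformly in $\ep$, so the same bound controls $\T^\star_\Omega$.

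For $\M_\Omega$, the same inner bound gives
\[
\Big|\int_{B(0,R)}\Omega(y')f(-y)\,dy\Big|\le\|\mathbf V\|_\infty\int_0^R r\int_\Sp|\nabla f(-r\theta)|\,r^{d-1}\,d\theta\,dr\le\|\mathbf V\|_\infty\, R\int_{B(0,R)}|\nabla f|.
\]
Dividing by $|B_R|$ gives a bound by $c\,\M_1(|\nabla f|)(0)$.

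\textbf{Necessity: $(3)\Rightarrow(1)$ and $(2)\Rightarrow(1)$.} This is the step where the real work lies.

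First, take $f$ radial, equal to $1$ on $B(0,1)$ and to $0$ outside $B(0,2)$. Then $\T^\ep_\Omega f(0)=\log(1/\ep)\int\Omega+O(1)$. The principal value must exist for (3) to make sense, which forces $\int\Omega=0$. For $\M_\Omega$, the average over $B(0,1)$ equals $|B(0,1)|^{-1}\int_{B(0,1)}\Omega(y')\,dy$, a nonzero multiple of $\int\Omega$, while $\M_1(|\nabla f|)(0)$ is finite. Replacing $f$ by $f(\cdot/\lambda)$ leaves that average unchanged but sends $\M_1(|\nabla f_\lambda|)(0)=\M_1(|\nabla f|)(0)$ ... so a further dilation/averaging argument, sending the right side to zero relative to the left, is needed; I would handle this by combining many dyadic scales.

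Next, given $\vp\in\Con^\infty(\Sp)$, test with $f(y)=\vp(-y')\eta(|y|)$, where $\eta$ is a logarithmic bump, smoothed near the origin: $\eta(r)=1$ on $[\delta,1]$ and $\eta$ is linear in $\log r$ on $[1,R]$.

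For (3), we get
\[
\T_\Omega f(0)\approx\log(1/\delta)\int\Omega\vp+\Big(\tfrac12\log R\Big)\int\Omega\vp,
\]
while
\[
\Int_1(|\nabla f|)(0)\approx\|\nabla_\Sp\vp\|_{\Lup^1}\,(\log(1/\delta)+\log R)+\|\vp\|_{\Lup^1}\,O(1),
\]
because the radial derivative contributes only $\int|\eta'|\,dr/r\cdot r$, which stays bounded. Letting $\delta\to0$ and $R\to\infty$ yields
\[
\Big|\int\Omega\vp\Big|\lesssim c\,\|\nabla_\Sp\vp\|_{\Lup^1}.
\]

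For (2), use a single annulus of radii $[1,2]$ with a radial cutoff. Then the left side is about $\int\Omega\vp$, while the right side involves $\|\nabla_\Sp\vp\|_{\Lup^1}+\|\vp\|_{\Lup^1}$. The radial term $\|\vp\|_{\Lup^1}$ must be eliminated. I would do this by stacking dyadic annuli with a slowly varying cutoff, so the radial derivative spreads out, and I expect this to be the main obstacle.
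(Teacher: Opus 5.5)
\textbf{Verdict: there is a genuine gap.} Your proposal does not prove (2)$\Rightarrow$(1), nor the claim that (2) forces $\int_\Sp\Omega=0$; for both you only describe what you would try.

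\textbf{What is missing, and the paper's fix.} Your logarithmic device for (3) does not transfer to $\M_\Omega$, because ball averages do not pick up a factor $\log(1/\delta)$. A single annulus therefore only gives $|\int_\Sp\Omega\vp|\lesssim c\,(\|\nabla_\Sp\vp\|_{\Lup^1(\Sp)}+\|\vp\|_{\Lup^1(\Sp)})$. The paper removes the second term in one line. Once $\int_\Sp\Omega=0$ is known, replace $\vp$ by $\vp-\vp_\Sp$, which does not change $\int_\Sp\Omega\vp$. Then the $\Lup^1$ Poincar\'e inequality $\|\vp-\vp_\Sp\|_{\Lup^1(\Sp)}\le C_d\|\nabla_\Sp\vp\|_{\Lup^1(\Sp)}$ shows the single annulus already suffices, so your ``main obstacle'' disappears. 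Many scales are needed only for the mean-zero step. The paper uses $f_L(y)=\psi(\log|y|/L)$ with $\psi=1$ on $(-\infty,0]$ and $\psi=0$ on $[1,\infty)$. Since $f_L=1$ on $B(0,1)$, we get $\M_\Omega f_L(0)\ge\omega_{d-1}^{-1}|\int_\Sp\Omega|$. On the other hand, $|\nabla f_L(y)|\le\|\psi'\|_{\Lup^\infty}/(L|y|)$ on $1\le|y|\le e^L$ gives $r^{1-d}\int_{B(0,r)}|\nabla f_L|\lesssim 1/L$ for every $r$. Hence $\M_1(|\nabla f_L|)(0)\lesssim 1/L\to0$, contradicting (2) unless $\int_\Sp\Omega=0$.

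\textbf{Your stacking idea also works.} It can be completed directly, and then you do not need Poincar\'e. Take $\eta$ smooth with $\eta=1$ on $[e^{-L},1]$, $\eta=0$ outside $[e^{-2L},e^{L}]$, and $|\eta'(r)|\le C/(Lr)$, and set $f(y)=\vp(-y')\eta(|y|)$. Then $s^{1-d}\int_{B(0,s)}|\nabla f|\lesssim\|\nabla_\Sp\vp\|_{\Lup^1(\Sp)}+\|\vp\|_{\Lup^1(\Sp)}/L$ for every $s>0$. This controls $\M_1(|\nabla f|)(0)$, because each cube $Q\ni0$ lies in $B(0,\sqrt d\,\ell(Q))$. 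Meanwhile $|\fint_{B(0,1)}\Omega(y')f(-y)\dy|\gtrsim|\int_\Sp\Omega\vp|$. Letting $L\to\infty$ gives (1), and taking $\vp\equiv1$ gives mean zero at the same time. The slow rise at the inner end is essential: a cutoff rising over $[\delta/2,\delta]$ lets cubes of side about $\delta$ contribute $\gtrsim\|\vp\|_{\Lup^1(\Sp)}$.

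\textbf{The rest of your proposal is sound.} The equivalences (1)$\Leftrightarrow$(5), (1)$\Leftrightarrow$(4), and the implications (1)$\Rightarrow$(2),(3) follow the paper. Applying the dual inequality slice by slice in $r$ is equivalent to the paper's use of $u_\ep$ and $w_s$.

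\textbf{Your (3)$\Rightarrow$(1) is a correct alternative route.} The paper argues via mean zero plus Poincar\'e; you instead use that $\int_0^\infty\eta(r)\,\frac{\dint r}{r}\ge\log(1/\delta)\to\infty$ while $\int_0^\infty|\eta'(r)|\dint r=2$ stays bounded. This gives $|\int_\Sp\Omega\vp|\le c\,\|\nabla_\Sp\vp\|_{\Lup^1(\Sp)}$ with no loss of constant, and the limit $R\to\infty$ is unnecessary. Taking $\vp\equiv1$ gives mean zero for free. Combined with (1)$\Rightarrow$(3), which has constant $1$, this shows the best constant in (3) equals $\|\Omega\|_{\dot\W^{-1,\infty}(\Sp)}$ exactly. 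That sharpens the comparability claimed in the statement.
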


\begin{remark}
We do not assume that $\Omega$ has mean zero in Theorem~\ref{maincharthm}: each of the conditions (1)--(5) individually forces $\int_{\Sp}\Omega\dint\mathcal H^{d-1} = 0$ (see the first step of the proof). In particular, none of them can hold when $\int_\Sp\Omega\neq0$. For (5) this relies on the weak definition \eqref{weakdiv} of $\Div_\Sp$, in which only the tangential part $\mathsf P_\theta\mathbf V$ of $\mathbf V$ enters.
\end{remark}

Conditions (4) and (5) are spherical analogues of known results for the divergence equation $\Div \mathbf{Y}=F$ in Euclidean space. The solvability of $\Div \mathbf{Y}=F$ with $\mathbf Y\in\Lup^\infty$ was studied by Bourgain and Brezis \cite{BB03,BB07}; see also \cite{BMR,DPP,VS14}. By duality, a solution exists exactly when $F$ defines a bounded functional on $\dot\W^{1,1}$; this is the Euclidean analogue of the equivalence (1)$\Leftrightarrow$(5). Phuc and Torres \cite{PT} characterized the existence of bounded solutions in terms of the isoperimetric-type condition $|F(U)|\leq C\,\mathcal H^{d-1}(\partial U)$ for smooth bounded open sets $U$; condition (4) is the analogue of this on $\Sp$.  

\subsection*{Organization}
In Section~\ref{prelim} we collect the preliminaries: Hausdorff measure, the spherical gradient, the weak spherical divergence, Poincar\'e inequalities on the sphere, including Lemma~\ref{lem:lorentzpoincare}, and some facts from complex analysis. In Section~\ref{proofs} we prove Theorem~\ref{mainthm}, including its sharpness, and the characterization Theorem~\ref{maincharthm}. In Section~\ref{plane} we treat the planar case and prove Theorem~\ref{mainthmplane} and Corollaries~\ref{boundedplane} and~\ref{boundedplaneweak}.

\section{Preliminaries}\label{prelim}
In this section we collect the background material used in our proofs.  These topics include, Lorentz spaces, Hausdorff measure, geometry on the sphere, and some facts from complex analysis.  

\subsection{Lorentz spaces}\label{sec:lorentz}

We recall a few basic facts about Lorentz spaces; see, e.g., \cite[\S1.4]{Gra14}.
Let $(X,\mu)$ be a measure space, and for a measurable function $f$ on $X$ and $1\leq p,q<\infty$ the
Lorentz space $\Lup^{p,q}(\mu)$ consists of all measurable $f$ with
\[
\|f\|_{\Lup^{p,q}(\mu)}^q=p\int_0^\infty t^q\,\mu(\{x:|f(x)|>t\})^{\frac{q}{p}}\,\frac{\dt}{t}<\infty,
\]
and for $q=\infty$ it consists of all measurable $f$ with
\[
\|f\|_{\Lup^{p,\infty}(\mu)}=\sup_{t>0}\,t\,\mu(\{x:|f(x)|>t\})^{\frac1p}<\infty.
\]
These are quasi-norms, $\|f\|_{\Lup^{p,p}(\mu)}=\|f\|_{\Lup^p(\mu)}$, and
$\Lup^{p,q_1}(\mu)\subseteq\Lup^{p,q_2}(\mu)$ continuously when $q_1\leq q_2$. For $1<p<\infty$, $1\leq q\leq\infty$, the
Lorentz--H\"older inequality
\begin{equation}\label{lorentzholder}
\int_X|fg|\dint\mu\leq \|f\|_{\Lup^{p,q}(\mu)}\,\|g\|_{\Lup^{p',q'}(\mu)}
\end{equation}
holds where $p'$ and $q'$ are the usual dual
exponents. 

\subsection{Hausdorff measure}\label{sec:hausdorff}

Following \cite[Chap. 2]{EG}, for $0\leq s<\infty$, the $s$-dimensional Hausdorff measure of a set $A\subseteq\R^d$ is
\[
\mathcal H^s(A)=\lim_{\delta\to0^+}\inf\Big\{\sum_j \alpha_s\Big(\frac{\textup{diam}\,C_j}{2}\Big)^s:\ A\subseteq\bigcup_j C_j,\ \textup{diam}\,C_j<\delta\Big\},
\qquad \alpha_s=\frac{\pi^{s/2}}{\Gamma(\frac s2+1)},
\]
normalized so that $\mathcal H^d$ coincides with Lebesgue measure on $\R^d$. We use $\mathcal H^s$ only on smooth submanifolds, where it agrees with the usual Riemannian volume measure. In particular:
\begin{enumerate}
\item $\mathcal H^{d-1}$ restricted to $\Sp$ (or to $\partial B(x,t)$) is the standard surface measure, $\omega_{d-1}=\mathcal H^{d-1}(\Sp)$ is the surface area of the unit sphere, and integration in polar coordinates reads
\[
\int_{\R^d}F(y)\dy=\int_0^\infty\int_\Sp F(\rho\theta)\dint\mathcal H^{d-1}(\theta)\,\rho^{d-1}\dint\rho,\qquad F\in\Lup^1(\R^d).
\]
\item If $E\subseteq\Sp$ is open with smooth boundary, then $\partial E$ is a compact smooth $(d-2)$-dimensional submanifold of $\Sp$, and $\mathcal H^{d-2}(\partial E)$ is its $(d-2)$-dimensional volume. When $d=2$, $\mathcal H^0$ is counting measure, so $\mathcal H^0(\partial E)$ is the number of endpoints of the finitely many arcs that make up $E$.
\end{enumerate}

\subsection{Geometry on $\Sp$}\label{sec:sphgrad}

In this section, we review the basic facts about analysis on the unit sphere that are used throughout the paper. We begin by defining the spherical gradient and divergence and relating them to their Euclidean counterparts. We then record the Poincar\'e inequalities on $\Sp$ that
we need. Finally, we discuss smooth subsets of $\Sp$, the coarea
formula, and the signed distance to a smooth boundary, which are used in
the proof of the sharpness of the constant in~(4).

\subsubsection{The spherical gradient and divergence}

For $\vp\in\Con^1(\Sp)$, let $\Phi(y)=\vp(y/|y|)$ be its $0$-homogeneous
extension to $\R^d\setminus\{0\}$, and define
\[
\nabla_\Sp\vp(\theta)=\nabla\Phi(\theta),\qquad \theta\in\Sp.
\]
Differentiating $\Phi(ty)=\Phi(y)$ at $t=1$ gives $y\cdot\nabla\Phi(y)=0$
for $y\neq0$. In particular $\nabla_\Sp\vp(\theta)\cdot\theta=0$, so
$\nabla_\Sp\vp(\theta)$ is tangent to $\Sp$ at $\theta$.

Let $\mathsf P_\theta=\textup{Id}-\theta\theta^{\mathsf T}$ be the
orthogonal projection onto $\theta^\perp$, so that
$\mathsf P_\theta w=w-(w\cdot\theta)\theta$ and $|\mathsf P_\theta w|\leq|w|$.
If $\Psi$ is any $\Con^1$ function on a neighborhood of $\Sp$ with
$\Psi|_\Sp=\vp$, then $\Psi$ and $\Phi$ have the same tangential
derivatives on $\Sp$, and therefore
\[
\nabla_\Sp\vp(\theta)=\mathsf P_\theta\nabla\Psi(\theta),\qquad \theta\in\Sp.
\]
Equivalently, $\nabla_\Sp\vp=(\delta_1\vp,\dots,\delta_d\vp)$, where
$\delta_j=\partial_j-\theta_j\,(\theta\cdot\nabla)$ is applied to any such
extension.

The spherical gradient is the angular part of the Euclidean gradient. For
$F\in\Con^1(\R^d\setminus\{0\})$ and $F_\rho(\theta)=F(\rho\theta)$,
\begin{equation}\label{polargrad}
\nabla F(\rho\theta)=\theta\,\partial_\rho F(\rho\theta)+\frac1\rho\,\nabla_\Sp F_\rho(\theta),
\end{equation}
so that
$|\nabla F(\rho\theta)|^2=|\partial_\rho F(\rho\theta)|^2+\rho^{-2}|\nabla_\Sp F_\rho(\theta)|^2$.

For $\mathbf V\in\Lup^1(\Sp,\R^d)$, the spherical divergence
$\Div_\Sp\mathbf V$ is defined weakly by \eqref{weakdiv}. Since
$\nabla_\Sp\vp(\theta)\perp\theta$, only the tangential part of
$\mathbf V$ enters \eqref{weakdiv}. Hence
$\Div_\Sp\mathbf V=\Div_\Sp(\mathsf P_\theta\mathbf V)$, where
$|\mathsf P_\theta\mathbf V|\leq|\mathbf V|$. If $\mathbf V$ is smooth, then
\[
\Div_\Sp\mathbf V=\sum_{j=1}^d\delta_j(\mathsf P_\theta\mathbf V)_j
=\sum_{j=1}^d\delta_jV_j-(d-1)\,\mathbf V\cdot\theta,
\]
where the last term comes from $\sum_{j=1}^d\delta_j\theta_j=d-1$ (and
$\sum_j\theta_j\delta_j=0$). In particular,
$\Div_\Sp\mathbf V=\sum_{j=1}^d\delta_jV_j$ when $\mathbf V$ is tangential,
that is, when $\mathbf V(\theta)\cdot\theta=0$ for $\theta\in\Sp$.

\subsubsection{Poincar\'e inequalities on $\Sp$}

For background on Poincar\'e inequalities we refer to the books of
Maz'ya \cite{Mazya} and Saloff-Coste \cite{SC}; the latter works in the
setting of Riemannian manifolds. We denote the average of $f$ over $\Sp$ by
\[
f_\Sp=\fint_{\Sp}f\dint\mathcal H^{d-1}=\frac{1}{\omega_{d-1}}\int_\Sp f\dint\mathcal H^{d-1},
\]
We need the following Lorentz-space Poincaré inequality, which follows from
\cite[Theorem~6 and Example~4]{MM}.

\begin{lemma}\label{lem:lorentzpoincare}
Let $d\geq3$. There is a constant $C=C(d)$ such that for all
$f\in\Con^1(\Sp)$,
\[
\|f-f_\Sp\|_{\Lup^{\frac{d-1}{d-2},1}(\Sp)}\leq C\|\nabla_\Sp f\|_{\Lup^1(\Sp)}.
\]
\end{lemma}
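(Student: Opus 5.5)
The plan is to derive the lemma from the weak-type (Sobolev) endpoint and then upgrade it to the Lorentz $\Lup^{n',1}$ norm, with $n=d-1$ the dimension of $\Sp$ and $n'=\frac{d-1}{d-2}$. The Lorentz upgrade is where the real work lies.

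\emph{Step 1: weak-type endpoint.} On the compact Riemannian manifold $\Sp$ we have the standard Poincar\'e--Sobolev inequality
\[
\|f-f_\Sp\|_{\Lup^{n',\infty}(\Sp)}\leq C\|\nabla_\Sp f\|_{\Lup^1(\Sp)}.
\]
One route is a spherical subrepresentation formula, $|f(\theta)-f_\Sp|\leq C\int_\Sp |\nabla_\Sp f(\eta)|\,|\theta-\eta|^{2-d}\dint\mathcal H^{d-1}(\eta)$, obtained by integrating along great-circle arcs exactly as in the Euclidean proof of \eqref{subrep}. The kernel $|\theta-\eta|^{-(n-1)}$ maps $\Lup^1(\Sp)$ into $\Lup^{n',\infty}(\Sp)$ uniformly, since the kernel lies in $\Lup^{n',\infty}$ uniformly in $\theta$. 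A second route is the isoperimetric inequality on the sphere, $\min\{\mathcal H^{d-1}(E),\mathcal H^{d-1}(\Sp\setminus E)\}^{\frac{d-2}{d-1}}\leq C\mathcal H^{d-2}(\partial E)$, which is classical and holds for $d\geq 3$.

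\emph{Step 2: Lorentz upgrade by truncation.} I would run Maz'ya's truncation argument through the coarea formula, as in Alvino's proof on cubes. Let $m$ be a median of $f$; it suffices to bound $\|f-m\|_{\Lup^{n',1}}$, because $|m-f_\Sp|\leq C\|f-m\|_{\Lup^1}\leq C\|f-m\|_{\Lup^{n',1}}$. Split $f-m=(f-m)_+-(f-m)_-$ and treat $g=(f-m)_+$, whose support $\{g>0\}$ has measure at most $\omega_{d-1}/2$. Using the layer-cake form of the $\Lup^{n',1}$ norm,
\[
\|g\|_{\Lup^{n',1}}\simeq\int_0^\infty \mathcal H^{d-1}(\{g>t\})^{1/n'}\dt .
\]
The level sets $E_t=\{g>t\}$ have at most half the measure of the sphere, so the isoperimetric inequality gives $\mathcal H^{d-1}(E_t)^{1/n'}\leq C\,\mathcal H^{d-2}(\partial E_t)$ for a.e.\ $t$ (regular values, by Sard). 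Applying the coarea formula $\int_0^\infty\mathcal H^{d-2}(\{g=t\})\dt=\int_\Sp|\nabla_\Sp g|$ then yields $\|g\|_{\Lup^{n',1}}\leq C\|\nabla_\Sp f\|_{\Lup^1}$. The same bound holds for $(f-m)_-$, which finishes the proof.

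\emph{Main obstacle.} The hard step is the relative isoperimetric inequality on $\Sp$ with the $\min$ of the two volumes, for $d\geq3$. If I do not take it from the literature (L\'evy--Schmidt spherical symmetrization gives caps as the extremals, and a direct computation for caps gives the bound), I would first try to derive it from Step 1 applied to smoothed characteristic functions. The weak-type Poincar\'e inequality implies the isoperimetric inequality by the standard Federer--Fleming/Maz'ya equivalence. Alternatively, following the statement's own attribution, the whole lemma is \cite[Theorem~6 and Example~4]{MM} specialized to $\Sp$, and that reference is how I would present it.
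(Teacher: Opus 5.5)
Your proposal is correct. The paper gives no argument of its own; it simply quotes \cite[Theorem~6 and Example~4]{MM}, which is also where you say you would end up.

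Your self-contained route is the standard Maz'ya-type derivation of a Lorentz--Poincar\'e inequality from isoperimetry. It combines three ingredients:
\begin{enumerate}
\item the isoperimetric inequality on $\Sp$ for sets of at most half the measure, applied to the level sets of $(f-m)_\pm$ with $m$ a median;
\item the layer-cake form of the $\Lup^{\frac{d-1}{d-2},1}$ norm;
\item the coarea formula.
\end{enumerate}
So the difference from the paper is one of presentation. The citation is short and general. Your argument is elementary and makes explicit that the only geometric input is the spherical isoperimetric inequality. Your Step~1 shows this input can itself be recovered from the weak-type Poincar\'e inequality, so Step~1 is only needed if you do not want to quote L\'evy's inequality.

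One technical point needs fixing. For $f\in\Con^1(\Sp)$ with $d-1\geq 2$, Sard's theorem is not available: for real-valued functions on a $(d-1)$-dimensional manifold it requires $\Con^{d-1}$ regularity. So your step ``regular values, by Sard'' is not justified as stated. There are two easy repairs:
\begin{enumerate}
\item Prove the estimate first for $f\in\Con^\infty(\Sp)$. Then pass to $\Con^1(\Sp)$ by approximating $f$ uniformly in $\Con^1$, using $\|h\|_{\Lup^{\frac{d-1}{d-2},1}(\Sp)}\leq C\|h\|_{\Lup^\infty(\Sp)}$ on the finite measure space $\Sp$.
\item Alternatively, replace $\mathcal H^{d-2}(\partial E_t)$ by the perimeter of $E_t$, and use the coarea formula in its Fleming--Rishel form together with the isoperimetric inequality for sets of finite perimeter.
\end{enumerate}
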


We also use the $\Lup^1$ Poincar\'e inequality
\begin{equation}\label{poincareL1}
\|\vp-\vp_\Sp\|_{\Lup^1(\Sp)}\leq C_d\|\nabla_\Sp\vp\|_{\Lup^1(\Sp)},\qquad \vp\in\Con^1(\Sp).
\end{equation}
For $d\geq3$ this follows from Lemma~\ref{lem:lorentzpoincare} and
\eqref{lorentzholder} which imply $\Lup^{\frac{d-1}{d-2},1}(\Sp)\subseteq \Lup^1(\Sp)$. For $d=2$ it is elementary. Write $\vp(t)$ for
$\vp(e^{it})$, so that $|\nabla_{\mathbb S^1}\vp|=|\vp'|$. The fundamental
theorem of calculus gives $|\vp(t)-\vp(t')|\leq\|\vp'\|_{\Lup^1(\mathbb S^1)}$,
and averaging in $t'$ yields
\[
\|\vp-\vp_{\mathbb S^1}\|_{\Lup^\infty(\mathbb S^1)}\leq\|\nabla_{\mathbb S^1}\vp\|_{\Lup^1(\mathbb S^1)},
\]
which implies \eqref{poincareL1}; see also the proof of
Lemma~\ref{primitiveV}.

\subsubsection{Coarea formula on $\Sp$ and signed distance}\label{sec:signeddist}

We use the coarea formula on $\Sp$ in the following form; see
\cite[\S3.2]{Federer}. If $U\subseteq\Sp$ is open, $u\in\Con^1(U)$, and
$g\colon U\to[0,\infty]$ is Borel, then
\begin{equation}\label{coarea}
\int_U g\,|\nabla_\Sp u|\dint\mathcal H^{d-1}
=\int_{-\infty}^{\infty}\int_{\{u=t\}}g\dint\mathcal H^{d-2}\dint t.
\end{equation}
In particular, for $\vp\in\Con^\infty(\Sp)$,
\[
\int_\Sp|\nabla_\Sp\vp|\dint\mathcal H^{d-1}=\int_{-\infty}^{\infty}\mathcal H^{d-2}(\{\vp=t\})\dint t.
\]
By Sard's theorem \cite{LeeSmooth}, almost every $t$ is a regular value
of $\vp$. For such $t$, the level set $\{\vp=t\}$ is a smooth
$(d-2)$-dimensional submanifold of $\Sp$, and it is the boundary of the
open set $\{\vp>t\}$.

We write $\operatorname{dist}_\Sp(\theta,\eta)=\arccos(\theta\cdot\eta)$
for the geodesic distance on $\Sp$. Let $E\subseteq\Sp$ be open with
smooth boundary. The \emph{signed geodesic distance} to $\partial E$ is
\begin{equation}\label{geodes}
h(\theta)=\pm\operatorname{dist}_\Sp(\theta,\partial E),
\end{equation}
with the plus sign for $\theta\in E$ and the minus sign otherwise. It is
Lipschitz with respect to $\operatorname{dist}_\Sp$. For example, for
a spherical cap $E=\{\theta:\theta\cdot e>\cos r\}$ we have
$h(\theta)=r-\arccos(\theta\cdot e)$.

\begin{lemma}\label{lem:signed-distance}
There is $t_0>0$ such that $h$ is smooth with $|\nabla_\Sp h|=1$ on
$\{|h|<t_0\}$, and $\mathcal H^{d-2}(\{h=t\})\to\mathcal H^{d-2}(\partial E)$
as $t\to0$.
\end{lemma}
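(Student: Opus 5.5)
The plan is to use the standard tubular neighborhood theory for the compact smooth hypersurface $\partial E$ inside the compact Riemannian manifold $\Sp$. Since $\partial E$ is compact and smooth, there is $t_0>0$ with the following property: the normal exponential map
\[
\Phi:\partial E\times(-t_0,t_0)\to\Sp,\qquad \Phi(p,t)=\cos t\,p+\sin t\,\nu(p),
\]
is a diffeomorphism onto the open set $\{|h|<t_0\}$. Here $\nu(p)\in T_p\Sp$ is the unit normal to $\partial E$ at $p$ pointing into $E$. The map $\Phi$ is smooth because $\nu$ is smooth. Injectivity and the diffeomorphism property for small $t_0$ follow from the inverse function theorem at $t=0$, where $d\Phi$ is the identity on $T\partial E\oplus\mathbb R\nu$. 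A compactness argument then promotes local injectivity to global injectivity on a uniform strip.

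The first step is to identify $h$ in these coordinates. Fix $\theta\in\{|h|<t_0\}$. Some $p\in\partial E$ realizes $\operatorname{dist}_\Sp(\theta,\partial E)$, and the first variation formula shows that the minimizing geodesic from $p$ to $\theta$ leaves $p$ orthogonally to $\partial E$. Hence $\theta=\Phi(p,\pm\operatorname{dist})$. Moreover, the sign agrees with whether $\theta\in E$: along the normal geodesic, $E$ lies on the $\nu$ side near $p$, and the segment does not cross $\partial E$ again before reaching $\theta$ by minimality. Injectivity of $\Phi$ then gives $h(\Phi(p,t))=t$. So $h$ is the second coordinate of $\Phi^{-1}$, and in particular it is smooth on $\{|h|<t_0\}$.

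The second step is to show $|\nabla_\Sp h|=1$. Since $h$ is $1$-Lipschitz for $\operatorname{dist}_\Sp$, we have $|\nabla_\Sp h|\le1$. On the other hand, along the unit-speed geodesic $t\mapsto\Phi(p,t)$ the function $h$ increases at rate exactly $1$, which forces $|\nabla_\Sp h|\ge1$. Therefore $|\nabla_\Sp h|=1$, and $\nabla_\Sp h(\Phi(p,t))=\partial_t\Phi(p,t)$. As a consequence, every $t\in(-t_0,t_0)$ is a regular value, and $\{h=t\}=\Phi(\partial E\times\{t\})$ is a smooth compact hypersurface.

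For the convergence of areas, write
\[
\mathcal H^{d-2}(\{h=t\})=\int_{\partial E}J_t(p)\dint\mathcal H^{d-2}(p),
\]
where $J_t(p)$ is the Jacobian of $p\mapsto\Phi(p,t)$ restricted to $T_p\partial E$. This Jacobian depends continuously (indeed smoothly) on $(p,t)$ on the compact set $\partial E\times[-t_0/2,t_0/2]$, and $J_0\equiv1$ since $\Phi(\cdot,0)$ is the identity on $\partial E$. Uniform continuity then gives $J_t\to1$ uniformly, so $\mathcal H^{d-2}(\{h=t\})\to\mathcal H^{d-2}(\partial E)$. Alternatively, one can compute $J_t=\prod_i(\cos t-\kappa_i\sin t)$ explicitly in terms of the principal curvatures, but that is not needed.

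The main obstacle is the first step: proving that nearest-point projection gives global (not merely local) injectivity of $\Phi$ on a uniform strip, and that the sign convention for $h$ matches the side of $\nu$. I would handle injectivity by contradiction. Suppose there were pairs $(p_n,t_n)\ne(p_n',t_n')$ with $|t_n|,|t_n'|\to0$ and the same image. Pass to convergent subsequences using compactness, and note that the limits must satisfy $p=p'$ because $\Phi(\cdot,0)$ is injective. This contradicts the local injectivity of $\Phi$ near $(p,0)$ given by the inverse function theorem. When $d=2$, $\partial E$ is a finite set of points, $\mathcal H^0$ is counting measure, and the argument degenerates to the trivial statement that $h$ is an arc-length coordinate near each endpoint, with $J_t\equiv1$.
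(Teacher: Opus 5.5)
Your proposal is correct and follows the same route as the paper's sketch: the tubular-neighborhood diffeomorphism $(p,s)\mapsto\cos s\,p+\sin s\,\nu(p)$ with $h(\Gamma(p,s))=s$, the unit rate of $h$ along the normal geodesics, and smooth dependence of $\Gamma(\cdot,t)$ on $t$ for the convergence of $\mathcal H^{d-2}(\{h=t\})$. You fill in details that the paper delegates to the references, namely the nearest-point/first-variation identification of $h$, the compactness argument for global injectivity, the $1$-Lipschitz plus Cauchy--Schwarz argument (which replaces the paper's appeal to orthogonal crossing of level sets), and the area formula with a continuous Jacobian $J_t\to 1$.
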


These facts are standard; see, e.g., \cite{Gray,MaMe}, and
\cite[Lemma~14.16]{GT} and \cite{KP} for the Euclidean case. Briefly, let
$\nu(p)$ denote the unit normal to $\partial E$ at $p$ that is tangent to
$\Sp$ and points into $E$. By the tubular neighborhood theorem \cite{Lee}
there is $t_0>0$ such that $\Gamma(p,s)=\cos s\,p+\sin s\,\nu(p)$ is a
diffeomorphism from $\partial E\times(-t_0,t_0)$ onto $\{|h|<t_0\}$ with
$h(\Gamma(p,s))=s$. Hence $h$ is smooth on $\{|h|<t_0\}$, with
$|\nabla_\Sp h|=1$ because $h$ increases at unit rate along the unit-speed
geodesics $s\mapsto\Gamma(p,s)$, which cross its level sets orthogonally.
Moreover, $\{h=t\}=\Gamma(\partial E,t)$, and $\Gamma(\cdot,t)$ depends
smoothly on $t$ and is the identity at $t=0$, so
$\mathcal H^{d-2}(\{h=t\})\to\mathcal H^{d-2}(\partial E)$.

\subsection{Complex analysis preliminaries}

Identifying $\C$ with $\R^2$ by writing $w=x+iy$, recall the Wirtinger derivatives
\[
\partial=\tfrac12(\partial_x-i\partial_y),\qquad
\bar\partial=\tfrac12(\partial_x+i\partial_y).
\]
Both are linear and satisfy the product rule, and
$\overline{\bar\partial f}=\partial\bar f$. One has $\bar\partial w=0$ and
$\bar\partial\bar w=1$, and a $\Con^1$ function $g$ on an open set is holomorphic
if and only if $\bar\partial g=0$ there (Cauchy--Riemann). In particular, since
$|w|^2=w\bar w$,
\begin{equation}\label{dbarpower}
\bar\partial|w|^s=\tfrac s2\,w\,|w|^{s-2},\qquad w\neq0,\ s\in\R.
\end{equation}

We will need a complex version of Green's formula on an annulus. Let
$0<\varepsilon<R$, let $D=\{\varepsilon\le|z|\le R\}$, and let
$u\in\Con^1(D)$. Then
\[
\int_{D}\bar\partial u\,\dint A=\frac{1}{2i}\oint_{\partial D} u \dint z
=\frac12\int_0^{2\pi}\Bigl(R\,e^{i\theta}u(Re^{i\theta})
-\varepsilon\,e^{i\theta}u(\varepsilon e^{i\theta})\Bigr)\,\dint\theta,
\]
where $\partial D$ is positively oriented: the outer circle counterclockwise
and the inner circle clockwise. In particular, if $u$ vanishes on
$\{|z|=R\}$, then
\begin{equation}\label{Greens}
\int_{D}\bar\partial u\,\dint A=-\frac{\varepsilon}2\int_0^{2\pi}e^{i\theta}\,u(\varepsilon e^{i\theta})\,\dint\theta .
\end{equation}

Finally, for $\Omega\in\Lup^1(\mathbb S^1)$ with Fourier coefficients
\[
\hat\Omega(k)=\frac{1}{2\pi}\int_0^{2\pi}\Omega(e^{i\theta})e^{-ik\theta}\dint\theta,
\]
the Fej\'er means
\[
\sigma_n\Omega(\zeta)=\sum_{k=-n}^n\Bigl(1-\frac{|k|}{n+1}\Bigr)\hat\Omega(k)\zeta^k,\qquad |\zeta|=1,
\]
are trigonometric polynomials, $\sigma_n\Omega\to\Omega$ in
$\Lup^1(\mathbb S^1)$ (see \cite{Gra14}), and $\sigma_n\Omega$ has mean zero
whenever $\Omega$ does, since $\widehat{\sigma_n\Omega}(0)=\hat\Omega(0)$.

\section{Proofs of Theorems~\ref{mainthm} and \ref{maincharthm}}\label{proofs}
In this section we prove our main results. Throughout, $f \in \Con_c^\infty(\R^d)$, so all of the operators below are well defined. Since every object involved commutes with translations, it suffices to prove the pointwise inequalities at $x = 0$.

\subsection{Two spherical functions}
For $\ep>0$, polar coordinates give
\[
\T_\Omega^\ep f(0)=\int_{|y|>\ep}\frac{\Omega(y')}{|y|^d}f(-y)\dy=\int_\Sp\Omega\,u_\ep\dint\mathcal H^{d-1},
\qquad
u_\ep(\theta)=\int_\ep^\infty \frac{f(-t\theta)}{t}\dint t .
\]
Similarly, for $s>0$,
\[
\fint_{B(0,s)}\Omega(y')f(-y)\dy=\int_\Sp\Omega\,w_s\dint\mathcal H^{d-1},
\qquad
w_s(\theta)=\frac{1}{v_ds^d}\int_0^s f(-t\theta)\,t^{d-1}\dint t ,
\]
where $v_d=|B(0,1)|=\omega_{d-1}/d$.

\begin{lemma}\label{lem:gradbounds}
For all $\ep,s>0$, the functions $u_\ep$ and $w_s$ belong to $\Con^\infty(\Sp)$, and
\[
\|\nabla_\Sp u_\ep\|_{\Lup^1(\Sp)}\leq \Int_1(|\nabla f|)(0)
\qquad\text{and}\qquad
\|\nabla_\Sp w_s\|_{\Lup^1(\Sp)}\leq \frac{2^{d-1}}{v_d}\,\M_1(|\nabla f|)(0).
\]
\end{lemma}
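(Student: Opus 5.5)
Smoothness of $u_\ep$ and $w_s$ is immediate. The integrands $f(-t\theta)/t$ and $f(-t\theta)t^{d-1}$ are smooth in $\theta$, and because $f$ has compact support the $t$-integration runs over a bounded interval ($[\ep,R]$, resp.\ $[0,s]$), with integrands that are uniformly smooth there. So we may differentiate under the integral sign.

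For the gradient bounds, we compute $\nabla_\Sp$ through the $0$-homogeneous extension, or equivalently by applying \eqref{polargrad} to $F(y)=f(-y)$ at radius $t$. This gives
\[
\nabla_\Sp\big[f(-t\,\cdot)\big](\theta)=t\,\mathsf P_\theta\big(-\nabla f(-t\theta)\big),
\qquad\text{so}\qquad
\big|\nabla_\Sp[f(-t\,\cdot)](\theta)\big|\le t\,|\nabla f(-t\theta)|.
\]

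For $u_\ep$ we differentiate under the integral and apply Minkowski's inequality:
\[
|\nabla_\Sp u_\ep(\theta)|\le\int_\ep^\infty|\nabla f(-t\theta)|\dt .
\]
Integrating over $\Sp$ and passing back to Cartesian coordinates via polar coordinates, with $t^{d-1}\dint t$ and weight $t^{1-d}=|y|^{1-d}$, yields
\[
\|\nabla_\Sp u_\ep\|_{\Lup^1(\Sp)}\le\int_{|y|>\ep}\frac{|\nabla f(-y)|}{|y|^{d-1}}\dy\le \Int_1(|\nabla f|)(0).
\]

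For $w_s$ the same computation gives
\[
|\nabla_\Sp w_s(\theta)|\le \frac{1}{v_d s^d}\int_0^s|\nabla f(-t\theta)|\,t^{d}\dt .
\]
Since $t^d\le s\,t^{d-1}$ on $[0,s]$, integrating over the sphere gives
\[
\|\nabla_\Sp w_s\|_{\Lup^1(\Sp)}\le\frac{s}{v_d s^d}\int_{B(0,s)}|\nabla f|\dy .
\]
To compare with $\M_1$, which is defined with cubes, we enclose $B(0,s)$ in the cube $Q$ centered at $0$ with side $2s$. Then $0\in Q$, $|Q|=2^ds^d$ and $\ell(Q)=2s$, so
\[
\frac{s}{v_ds^d}\int_{B(0,s)}|\nabla f|\le\frac{s\,2^d}{v_d}\fint_Q|\nabla f|=\frac{2^{d-1}}{v_d}\,\ell(Q)\fint_Q|\nabla f|\le\frac{2^{d-1}}{v_d}\M_1(|\nabla f|)(0),
\]
which matches the stated constant.

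The only delicate point is justifying the tangential-gradient bound, namely that $\nabla_\Sp$ of $\theta\mapsto f(-t\theta)$ equals $t$ times the projected Euclidean gradient. This follows directly from the identity $\nabla_\Sp\vp=\mathsf P_\theta\nabla\Psi$ applied to the extension $\Psi(y)=f(-ty)$, together with $|\mathsf P_\theta w|\le|w|$. Everything else is bookkeeping with polar coordinates.
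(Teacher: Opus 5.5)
Your proof is correct and follows the paper's argument essentially step for step. You differentiate under the integral sign and write $\nabla_\Sp$ of $\theta\mapsto f(-t\theta)$ as $-t\,\mathsf P_\theta\nabla f(-t\theta)$; then you use $|\mathsf P_\theta w|\le|w|$ and polar coordinates. For $w_s$ you use $t\le s$ (the paper's $|y|<s$) and enclose $B(0,s)$ in the cube of side $2s$ centered at $0$, which gives the same constant $2^{d-1}/v_d$.
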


\begin{proof}
Extend $u_\ep$ to $\R^d\setminus\{0\}$ by $U(\xi)=\int_\ep^\infty f(-t\xi)\,t^{-1}\dint t$. Since $f\in\Con_c^\infty(\R^d)$ and $\ep>0$, we may differentiate under the integral sign, and $\nabla_\xi[f(-t\xi)]=-t\nabla f(-t\xi)$ gives
\[
\nabla_\Sp u_\ep(\theta)=\mathsf P_\theta\nabla U(\theta)=-\int_\ep^\infty\mathsf P_\theta\nabla f(-t\theta)\dint t .
\]
Since $|\mathsf P_\theta w|\leq|w|$, polar coordinates give
\begin{multline*}
\|\nabla_\Sp u_\ep\|_{\Lup^1(\Sp)}
\leq\int_\Sp\int_\ep^\infty|\nabla f(-t\theta)|\dint t\dint\mathcal H^{d-1}(\theta)
\\=\int_{|y|>\ep}\frac{|\nabla f(-y)|}{|y|^{d-1}}\dy
\leq\Int_1(|\nabla f|)(0).
\end{multline*}
The same argument applied to $w_s$ gives
\[
\nabla_\Sp w_s(\theta)=-\frac{1}{v_ds^d}\int_0^s\mathsf P_\theta\nabla f(-t\theta)\,t^{d}\dint t,
\]
and hence, since $|y|<s$ on the region of integration,
\[
\|\nabla_\Sp w_s\|_{\Lup^1(\Sp)}
\leq\frac{1}{v_ds^d}\int_{|y|<s}|y|\,|\nabla f(-y)|\dy
\leq\frac{1}{v_ds^{d-1}}\int_{B(0,s)}|\nabla f|\dy .
\]
If $Q$ is the cube of side length $2s$ centered at $0$, then $B(0,s)\subseteq Q$ and the right-hand side is at most $\frac{2^{d-1}}{v_d}\,\ell(Q)\fint_Q|\nabla f|\leq\frac{2^{d-1}}{v_d}\M_1(|\nabla f|)(0)$.
\end{proof}

\subsection{Proof of Theorem~\ref{mainthm}}
\begin{proof}[Proof of Theorem~\ref{mainthm}]
\emph{The upper bounds.} Let $\ep>0$. Since $\Omega$ has mean zero, we may subtract $(u_\ep)_\Sp$ from $u_\ep$. We then apply the Lorentz--H\"older inequality \eqref{lorentzholder} with $p=d-1$ and $q=\infty$, so that $p'=\frac{d-1}{d-2}$ and $q'=1$. Lemma~\ref{lem:lorentzpoincare} and Lemma~\ref{lem:gradbounds} then give
\begin{align*}
|\T_\Omega^\ep f(0)|
&=\Big|\int_\Sp\Omega\,\big(u_\ep-(u_\ep)_\Sp\big)\dint\mathcal H^{d-1}\Big|\\
&\leq \|\Omega\|_{\Lup^{d-1,\infty}(\Sp)}\,\|u_\ep-(u_\ep)_\Sp\|_{\Lup^{\frac{d-1}{d-2},1}(\Sp)}\\
&\leq c_d\|\Omega\|_{\Lup^{d-1,\infty}(\Sp)}\,\|\nabla_\Sp u_\ep\|_{\Lup^1(\Sp)}\\
&\leq c_d\|\Omega\|_{\Lup^{d-1,\infty}(\Sp)}\,\Int_1(|\nabla f|)(0),
\end{align*}
The right-hand side is independent of $\ep$, so taking the supremum over $\ep>0$ gives $\T_\Omega^\star f(0)\leq c_d\|\Omega\|_{\Lup^{d-1,\infty}(\Sp)}\Int_1(|\nabla f|)(0)$, and in particular \eqref{newptwiserough}. The same argument with $w_s$ in place of $u_\ep$ gives
\[
\Big|\fint_{B(0,s)}\Omega(y')f(-y)\dy\Big|\leq c_d\|\Omega\|_{\Lup^{d-1,\infty}(\Sp)}\M_1(|\nabla f|)(0),
\]
and taking the supremum over $s>0$ gives \eqref{newptwisemax}.

\emph{Sharpness.} Let $1\leq r<d-1$ and fix $a$ with $1<a<\frac{d-1}{r}$. In what follows $|\theta-\xi|$ is the Euclidean distance between $\theta,\xi\in\Sp$, $\dint\theta=\dint\mathcal H^{d-1}(\theta)$, and $c,C$ are positive constants depending only on $d$, $r$, and $a$.

Let $e_d=(0,\dots,0,1)$ and $\Omega_0(\theta)=|\theta-e_d|^{-a}$. Since $$\mathcal H^{d-1}(\{\theta\in\Sp:|\theta-e_d|<t\})\simeq t^{d-1}$$ for $0<t\leq2$ and $ar<d-1$ we have,
\[
\int_\Sp\Omega_0^r\,\dint\theta\leq C\int_0^2t^{-ar}\,t^{d-2}\dint t<\infty .
\]
Let $A=\{\theta\in\Sp:|\theta+e_d|<1\}$ and set
\[
\Omega=\Omega_0-\kappa\,\mathbf 1_A,\qquad \kappa=\frac{1}{\mathcal H^{d-1}(A)}\int_\Sp\Omega_0\,\dint\theta .
\]
Then $\Omega\in\Lup^r(\Sp)$ has mean zero. Moreover, $\Omega=\Omega_0$ on $\{\theta\in\Sp:|\theta-e_d|<1\}$, since every such $\theta$ lies outside $A$, as
$|\theta+e_d|^2=4-|\theta-e_d|^2>3.$

Fix $\psi\in\Con^\infty([0,\infty))$ with $0\leq\psi\leq1$, $\psi=1$ on $[0,1]$, $\psi=0$ on $[2,\infty)$, and $|\psi'|\leq2$, and for $0<\delta<\frac12$ set $\vp_\delta(\theta)=\psi(|\theta-e_d|/\delta)$. Then $\vp_\delta\in\Con^\infty(\Sp)$ is supported in $\{\theta\in \Sp:|\theta-e_d|<2\delta\}$ and $|\nabla_\Sp\vp_\delta|\leq2/\delta$, so
\begin{equation}\label{capest}
\int_\Sp\vp_\delta\,\dint\theta+\int_\Sp|\nabla_\Sp\vp_\delta|\,\dint\theta\leq c\delta^{d-1}+ C\delta^{d-2}\leq C\delta^{d-2}.
\end{equation}
On the other hand, $\Omega=\Omega_0\geq0$ on the support of $\vp_\delta$, and $\Omega_0\vp_\delta\geq\delta^{-a}$ on $\{\theta\in \Sp:|\theta-e_d|<\delta\}$, so
\begin{equation}\label{caplower}
\int_\Sp\Omega\,\vp_\delta\,\dint\theta\geq\int_{\{|\theta-e_d|<\delta\}}\Omega_0\,\dint\theta\geq c\,\delta^{d-1-a}.
\end{equation}

Fix $\eta\in\Con_c^\infty((\frac12,3))$ with $0\leq\eta\leq1$ and $\eta=1$ on $[1,2]$, and set $f_\delta(y)=\vp_\delta(-y')\,\eta(|y|)$. Then $f_\delta\in\Con_c^\infty(\R^d)$, since it is supported in the annulus $\frac12<|y|<3$, where it is smooth. Since $f_\delta(-y)=\vp_\delta(y')\eta(|y|)$ vanishes near $0$, no principal value is needed, and polar coordinates together with \eqref{caplower} give
\[
\T_\Omega f_\delta(0)=\Big(\int_\Sp\Omega\,\vp_\delta\,\dint\theta\Big)\int_0^\infty\eta(\rho)\,\frac{\dint\rho}{\rho}\geq c\,\delta^{d-1-a},
\]
and
\begin{multline*}
\M_\Omega f_\delta(0)\geq\fint_{B(0,3)}\Omega(y')f_\delta(-y)\dy\\=\frac{1}{|B(0,3)|}\Big(\int_\Sp\Omega\,\vp_\delta\,\dint\theta\Big)\int_0^3\eta(\rho)\rho^{d-1}\dint\rho\geq c\,\delta^{d-1-a}.
\end{multline*}

For the right-hand sides, \eqref{polargrad} and the fact that $\theta\mapsto-\theta$ is an isometry of $\Sp$ give
\[
|\nabla f_\delta(y)|\leq\frac{|\nabla_\Sp\vp_\delta(-y')|}{|y|}\,\eta(|y|)+\vp_\delta(-y')\,|\eta'(|y|)|.
\]
Integrating in polar coordinates and using \eqref{capest},
\begin{multline*}
\Int_1(|\nabla f_\delta|)(0)
\leq\int_\Sp|\nabla_\Sp\vp_\delta|\,\dint\theta\int_0^\infty\eta(\rho)\,\frac{\dint\rho}{\rho}
\\+\int_\Sp\vp_\delta\,\dint\theta\int_0^\infty|\eta'(\rho)|\dint\rho\leq C\delta^{d-2}.
\end{multline*}
Let $Q$ be a cube containing $0$. If $Q$ meets the support of $f_\delta$, then $\ell(Q)\geq c$, so similarly
\[
\M_1(|\nabla f_\delta|)(0)\leq C\int_{\R^d}|\nabla f_\delta|\dy\leq C\delta^{d-2}.
\]
Hence
\[
\frac{\T_\Omega f_\delta(0)}{\Int_1(|\nabla f_\delta|)(0)}\geq c\,\delta^{1-a}
\qquad\text{and}\qquad
\frac{\M_\Omega f_\delta(0)}{\M_1(|\nabla f_\delta|)(0)}\geq c\,\delta^{1-a},
\]
both of which tend to infinity as $\delta\to0^+$ because $a>1$.
\end{proof}

\subsection{Proof of Theorem~\ref{maincharthm}}

\begin{proof}[Proof of Theorem~\ref{maincharthm}]

Let us first note that each of (1)--(5) implies that $\Omega$ has mean zero. For (1) and (5), take $\vp\equiv1$ in \eqref{dualnorm} and \eqref{weakdiv}. For (4), take $E=\Sp$, whose boundary is empty. For (2) and (3), suppose that $\int_{\Sp}\Omega \neq 0$. To see that (3) fails, let $f \in \Con_c^\infty(\R^d)$ with $f = 1$ on the unit ball. For $0 < \ep < 1$, polar coordinates give
\[
  \T^\ep_\Omega f(0)
  = \int_{\ep<|y|<1} \frac{\Omega(y')}{|y|^d}\dy + \int_{|y|\ge 1} \frac{\Omega(y')}{|y|^d}\, f(-y)\dy
  = \log\Bigl(\frac{1}{\ep}\Bigr)\int_{\Sp}\Omega + O(1).
\]
Hence $|\T^\ep_\Omega f(0)| \to \infty$ as $\ep \to 0$, so the principal value defining $\T_\Omega f(0)$ does not exist and (3) cannot hold.

The quantity $\M_\Omega f$ is finite for any $\Omega\in \Lup^1(\Sp)$ and $f\in \Con_c^\infty(\R^d)$, but (2) still fails. Fix $\psi \in \Con^\infty(\R)$ with $0 \le \psi \le 1$, $\psi = 1$ on $(-\infty, 0]$, and $\psi = 0$ on $[1, \infty)$. For $L \ge 1$, set
\[
  f_L(y) = \psi\Bigl(\frac{\log|y|}{L}\Bigr), \qquad f_L(0) = 1.
\]
Then $f_L = 1$ on the unit ball, $f_L = 0$ for $|y| \ge e^L$, and $f_L \in \Con_c^\infty(\R^d)$. Its gradient is supported in $1 \le |y| \le e^L$, where
\[
  |\nabla f_L(y)| \le \frac{\|\psi'\|_{\Lup^\infty(\R)}}{L|y|}.
\]
On the one hand, for $r \le 1$,
\[
  \M_\Omega f_L(0) \ge \Bigl|\fint_{B(0,r)}\Omega(y')f_L(-y)\dy\Bigr| = \Bigl|\fint_{B(0,r)}\Omega(y')\dy\Bigr| = \frac{1}{\omega_{d-1}}\Bigl|\int_{\Sp}\Omega\Bigr|.
\]
On the other hand, every cube $Q\ni 0$ is contained in $B(0,r)$ with $r=\sqrt d\,\ell(Q)$, so $\ell(Q)\fint_Q|\nabla f_L|\leq c_d\, r^{1-d}\int_{|y|<r}|\nabla f_L(y)|\dy$. The last integral vanishes for $r \le 1$, while for $r > 1$,
\[
  r^{1-d}\int_{|y|<r}|\nabla f_L(y)|\dy
  \le \frac{c_d\|\psi'\|_{\Lup^\infty(\R)}}{L}\, r^{1-d}\int_1^r s^{d-2}\,\dint s
  \le \frac{c_d\|\psi'\|_{\Lup^\infty(\R)}}{L}.
\]
Thus $\M_1(|\nabla f_L|)(0) \leq c_d\|\psi'\|_{\Lup^\infty(\R)}/L$, while $\M_\Omega f_L(0) \geq \frac{1}{\omega_{d-1}}|\int_{\Sp}\Omega|$. Letting $L \to \infty$ shows that (2) cannot hold.

Consequently, whenever one of (1)--(5) holds, $\Omega$ has mean zero; we use this freely in the remaining steps.

Write $\|\Omega\|=\|\Omega\|_{\dot\W^{-1,\infty}(\Sp)}$. By the definition \eqref{dualnorm}, and since $\nabla_\Sp\vp=0$ forces $\vp$ to be constant, (1) is equivalent to
\begin{equation}\label{dualineq}
\Big|\int_\Sp\Omega\,\vp\dint\mathcal H^{d-1}\Big|\leq\|\Omega\|\,\|\nabla_\Sp\vp\|_{\Lup^1(\Sp)}\qquad\text{for all }\vp\in\Con^\infty(\Sp).
\end{equation}
We show that each of (2)--(5) is equivalent to (1). By translation invariance it suffices to prove (2) and (3) at $x=0$.

\emph{(1) $\Rightarrow$ (2), (3).} Let $\ep,s>0$, and let $u_\ep$, $w_s$ be as in Lemma~\ref{lem:gradbounds}. By \eqref{dualineq} and Lemma~\ref{lem:gradbounds},
\[
|\T_\Omega^\ep f(0)|=\Big|\int_\Sp\Omega\,u_\ep\dint\mathcal H^{d-1}\Big|\leq\|\Omega\|\,\|\nabla_\Sp u_\ep\|_{\Lup^1(\Sp)}\leq\|\Omega\|\,\Int_1(|\nabla f|)(0),
\]
and
\[
\Big|\fint_{B(0,s)}\Omega(y')f(-y)\dy\Big|=\Big|\int_\Sp\Omega\,w_s\dint\mathcal H^{d-1}\Big|\leq\frac{2^{d-1}}{v_d}\|\Omega\|\,\M_1(|\nabla f|)(0).
\]
Taking suprema over $\ep>0$ and $s>0$ gives (3), in fact for $\T_\Omega^\star$, and (2).

\emph{(2) $\Rightarrow$ (1) and (3) $\Rightarrow$ (1).} Let $\vp\in\Con^\infty(\Sp)$ with $\|\nabla_\Sp \vp\|_{\Lup^1(\Sp)}\leq 1$. By the first step, $\Omega$ has mean zero, so replacing $\vp$ by $\vp-\vp_\Sp$ changes neither side of \eqref{dualineq}, and we may assume $\vp_\Sp=0$. Fix $\eta\in\Con_c^\infty((\frac12,3))$ with $0\leq\eta\leq1$ and $\eta=1$ on $[1,2]$, and set $f(y)=\vp(-y')\,\eta(|y|)$. Then $f\in\Con_c^\infty(\R^d)$ is supported in the annulus $\{y:\frac12<|y|<3\}$, and
\[
\T_\Omega f(0)=\int_{\R^d}\frac{\Omega(y')}{|y|^d}f(-y)\dy=\Big(\int_\Sp\Omega\,\vp\dint\mathcal H^{d-1}\Big)\int_0^\infty\eta(\rho)\,\frac{\dint\rho}{\rho},
\]
\[
\fint_{B(0,3)}\Omega(y')f(-y)\dy=\frac{1}{v_d3^d}\Big(\int_\Sp\Omega\,\vp\dint\mathcal H^{d-1}\Big)\int_0^3\eta(\rho)\rho^{d-1}\dint\rho .
\]
Both radial integrals are bounded below by a constant depending only on $d$, so
\begin{equation}\label{charlower}
\Big|\int_\Sp\Omega\,\vp\dint\mathcal H^{d-1}\Big|\leq C_d\min\big\{|\T_\Omega f(0)|,\ \M_\Omega f(0)\big\}.
\end{equation}
On the other hand, by \eqref{polargrad},
\[
|\nabla f(y)|\leq\frac{|\nabla_\Sp\vp(-y')|}{|y|}\,\eta(|y|)+|\vp(-y')|\,|\eta'(|y|)|,
\]
so integrating in polar coordinates and using \eqref{poincareL1} (recall that $\vp_\Sp=0$),
\[
\Int_1(|\nabla f|)(0)\leq C_d\big(\|\nabla_\Sp\vp\|_{\Lup^1(\Sp)}+\|\vp\|_{\Lup^1(\Sp)}\big)\leq C_d\|\nabla_\Sp\vp\|_{\Lup^1(\Sp)}\leq C_d.
\]
Let $Q\subseteq\R^d$ be a cube with $0\in Q$. If $Q$ meets the support of $f$, then $\ell(Q)\geq(2\sqrt d)^{-1}$, so $\ell(Q)^{1-d}\leq(2\sqrt d)^{d-1}$ and similarly
\[
\M_1(|\nabla f|)(0)\leq C_d\int_{\R^d}|\nabla f|\dy\leq C_d\big(\|\nabla_\Sp\vp\|_{\Lup^1(\Sp)}+\|\vp\|_{\Lup^1(\Sp)}\big)\leq C_d.
\]
Combining these with \eqref{charlower}, if (2) or (3) holds with constant $c$, then \eqref{dualineq} holds with $\|\Omega\|$ replaced by $C_dc$, so $\|\Omega\|\leq C_dc$.

\emph{(5) $\Rightarrow$ (1).} If $\Omega=\Div_\Sp \mathbf{V}$, then by \eqref{weakdiv}, for every $\vp\in\Con^\infty(\Sp)$,
\[
\Big|\int_\Sp\Omega\,\vp\dint\mathcal H^{d-1}\Big|=\Big|\int_\Sp \mathbf{V}\cdot\nabla_\Sp\vp\dint\mathcal H^{d-1}\Big|\leq\| \mathbf{V}\|_{\Lup^\infty(\Sp,\R^d)}\|\nabla_\Sp\vp\|_{\Lup^1(\Sp)},
\]
so $\|\Omega\|\leq\| \mathbf{V}\|_{\Lup^\infty(\Sp,\R^d)}$.

\emph{(1) $\Rightarrow$ (5).} Let $\mathcal G=\{\nabla_\Sp\vp:\vp\in\Con^\infty(\Sp)\}$, which is a subspace of $\Lup^1(\Sp,\R^d)$, and define $\ell:\mathcal G\to\R$ by
\[
\ell(\nabla_\Sp\vp)=-\int_\Sp\Omega\,\vp\dint\mathcal H^{d-1}.
\]
This is well defined: if $\nabla_\Sp\vp_1=\nabla_\Sp\vp_2$, then $\vp_1-\vp_2$ is constant on $\Sp$, and since $\Omega$ has mean zero by the first step, $\ell(\nabla_\Sp\vp_1)=\ell(\nabla_\Sp\vp_2)$. Moreover, $|\ell(\mathbf G)|\leq\|\Omega\|\,\|\mathbf G\|_{\Lup^1(\Sp,\R^d)}$ for $\mathbf G\in \mathcal G$ by \eqref{dualineq}. By the Hahn--Banach theorem, $\ell$ extends to a bounded linear functional on $\Lup^1(\Sp,\R^d)$ with norm at most $\|\Omega\|$. Since $\Lup^1(\Sp,\R^d)^*=\Lup^\infty(\Sp,\R^d)$, there is $ \mathbf{V}\in\Lup^\infty(\Sp,\R^d)$ with $\| \mathbf{V}\|_{\Lup^\infty(\Sp,\R^d)}\leq\|\Omega\|$ and
\[
-\int_\Sp\Omega\,\vp\dint\mathcal H^{d-1}=\int_\Sp \mathbf{V}\cdot\nabla_\Sp\vp\dint\mathcal H^{d-1}\qquad\text{for all }\vp\in\Con^\infty(\Sp),
\]
which shows that $\Omega=\Div_\Sp \mathbf{V}$. Combined with the previous step, we have $\|\Omega\|=\| \mathbf{V}\|_{\Lup^\infty(\Sp,\R^d)}$, so the minimum in Theorem~\ref{maincharthm} is attained and equals $\|\Omega\|$.

\emph{(4) $\Rightarrow$ (1).} Let $\vp\in\Con^\infty(\Sp)$ with $m=\min\vp$ and $M=\max\vp$. Since $\vp(\theta)=m+\int_m^M\mathbf 1_{\{\vp>t\}}(\theta)\dint t$ and $\Omega$ has mean zero by the first step, the layer-cake formula gives
\[
\int_\Sp\Omega\,\vp\dint\mathcal H^{d-1}=\int_m^M\Big(\int_{\{\vp>t\}}\Omega\dint\mathcal H^{d-1}\Big)\dint t .
\]
By Sard's theorem, almost every $t\in(m,M)$ is a regular value of $\vp$, and for such $t$ the set $\{\vp>t\}$ is open with smooth boundary $\{\vp=t\}$. Hence, by (4) and the coarea formula on $\Sp$,
\[
\Big|\int_\Sp\Omega\,\vp\dint\mathcal H^{d-1}\Big|\leq C\int_m^M\mathcal H^{d-2}(\{\vp=t\})\dint t=C\,\|\nabla_\Sp\vp\|_{\Lup^1(\Sp)},
\]
so $\|\Omega\|\leq C$.

\emph{(1) $\Rightarrow$ (4).} Let $E\subseteq\Sp$ be open with smooth
boundary, let $h$ be the signed geodesic distance to $\partial E$ given in \eqref{geodes} and let $t_0>0$ be as in
Lemma~\ref{lem:signed-distance}. Thus $h$ is smooth with
$|\nabla_\Sp h|=1$ on $\{|h|<t_0\}$, and
$\mathcal H^{d-2}(\{h=t\})\to\mathcal H^{d-2}(\partial E)$ as $t\to0$. Fix
a nondecreasing $\chi\in\Con^\infty(\R)$ with $\chi=0$ on $(-\infty,0]$
and $\chi=1$ on $[1,\infty)$, and for $k>1/t_0$ set $\vp_k=\chi(kh)$.
Then $\vp_k\in\Con^\infty(\Sp)$, $0\leq\vp_k\leq1$, and
$\vp_k\to\mathbf 1_E$ pointwise. Since $\nabla_\Sp\vp_k$ vanishes outside
$\{0\le h\le 1/k\}\subseteq\{|h|<t_0\}$, the coarea formula
\eqref{coarea} with $U=\{|h|<t_0\}$, $u=h$ and $g=k\chi'(kh)$ gives
\[
\|\nabla_\Sp\vp_k\|_{\Lup^1(\Sp)}=\int_0^{1/k}k\chi'(kt)\,\mathcal H^{d-2}(\{h=t\})\dint t\longrightarrow\mathcal H^{d-2}(\partial E),
\]
since $\int_0^{1/k}k\chi'(kt)\dint t=1$. Since
$|\Omega\vp_k|\leq|\Omega|\in\Lup^1(\Sp)$, the dominated convergence
theorem and \eqref{dualineq} give
\[
\Big|\int_E\Omega\dint\mathcal H^{d-1}\Big|=\lim_{k\to\infty}\Big|\int_\Sp\Omega\,\vp_k\dint\mathcal H^{d-1}\Big|\leq\|\Omega\|\,\mathcal H^{d-2}(\partial E).
\]
Together with the previous step, this shows that the best constant in
(4) equals $\|\Omega\|$.
\end{proof}

\section{Operators in the plane}\label{plane}

In dimension $d=2$, we use complex analysis. Let $\Omega\in\Lup^1(\mathbb S^1)$ be complex valued and have mean zero. For $f\in\Con_c^\infty(\C)$, we write
$$\T_\Omega f(z)=\textup{p.v.}\int_{\C}\frac{\Omega(w/|w|)}{|w|^2}f(z-w)\dint A(w)=\lim_{\ep\ra 0}\int_{|w|>\ep}\frac{\Omega(w/|w|)}{|w|^2}f(z-w)\dint A(w).$$
The key observation in the proof of Theorem~\ref{mainthmplane} is that $\T_\Omega$ can be written as a twisted Cauchy transform,
$$\T_\Omega f(z)=\int_{\C}G(w)\,\bar{\partial}f(z-w)\dint A(w),$$
where $|G(w)|\leq C|w|^{-1}$ and $\bar\partial G(w)=\Omega(w/|w|)/|w|^2$ for $w\neq0$. To construct $G$ we need a bounded primitive of $\Omega$.

\begin{lemma}\label{primitiveV}
Let $\Omega\in\Lup^1(\mathbb S^1)$ have mean zero, and let $V_0$ be the primitive of $\Omega$, that is,
$$V_0(e^{i\theta})=\int_0^\theta\Omega(e^{it})\dint t .$$
If we define
\begin{equation}\label{defV}
V=V_0-\frac{1}{2\pi}\int_0^{2\pi}V_0(e^{it})\dint t,
\end{equation}
then $V$ is continuous on $\mathbb S^1$, has mean zero, and satisfies
$$\|V\|_{\Lup^\infty(\mathbb S^1)}\leq\frac12\|\Omega\|_{\Lup^1(\mathbb S^1)}.$$
\end{lemma}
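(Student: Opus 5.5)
The plan is to verify the three properties in turn: continuity, mean zero, and the $\Lup^\infty$ bound. Continuity is the only point that uses the mean-zero hypothesis. Since $\Omega\in\Lup^1$, the function $\theta\mapsto\int_0^\theta\Omega(e^{it})\dint t$ is absolutely continuous on $[0,2\pi]$. Its values at $\theta=0$ and $\theta=2\pi$ are $0$ and $\int_0^{2\pi}\Omega=0$. Hence $V_0$ is a well-defined continuous function on $\mathbb S^1$, and subtracting the constant in \eqref{defV} preserves continuity. The mean-zero property of $V$ holds by construction, since we subtract exactly the average of $V_0$ (with respect to $\dint t/2\pi$).

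For the sup bound, fix $\theta$ and write
\[
V(e^{i\theta})=\frac1{2\pi}\int_0^{2\pi}\big(V_0(e^{i\theta})-V_0(e^{i\tau})\big)\dint\tau .
\]
The crude bound $|V_0(e^{i\theta})-V_0(e^{i\tau})|\le\|\Omega\|_{\Lup^1}$ only gives constant $1$, so to get $\tfrac12$ we use the two arcs. Let $I$ be one of the arcs between $\tau$ and $\theta$ and $J$ the complementary arc. Then
\[
V_0(e^{i\theta})-V_0(e^{i\tau})=\pm\int_I\Omega=\mp\int_J\Omega,
\]
where the second equality uses mean zero. Hence
\[
|V_0(e^{i\theta})-V_0(e^{i\tau})|\le\min\Big\{\int_I|\Omega|,\int_J|\Omega|\Big\}\le\tfrac12\|\Omega\|_{\Lup^1(\mathbb S^1)},
\]
because $\int_I|\Omega|+\int_J|\Omega|=\|\Omega\|_{\Lup^1}$. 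Averaging over $\tau$ gives $|V(e^{i\theta})|\le\tfrac12\|\Omega\|_{\Lup^1(\mathbb S^1)}$.

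Throughout, $\|\Omega\|_{\Lup^1(\mathbb S^1)}$ is taken to mean $\int_0^{2\pi}|\Omega(e^{it})|\dint t$, the arclength measure $\mathcal H^1$. The main (mild) obstacle is this sharp constant $\tfrac12$: the naive estimate loses a factor of $2$, and the two-arc splitting, which again uses mean zero, is what recovers it. For complex-valued $\Omega$ the same argument applies verbatim, since only the triangle inequality $|\int_I\Omega|\le\int_I|\Omega|$ is used.
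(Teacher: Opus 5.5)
Your proposal is correct and follows essentially the same argument as the paper: mean zero gives periodicity, hence continuity; the two complementary arcs bound each difference of values of $V$ by $\min\{a,b\}\le\frac{a+b}{2}=\frac12\|\Omega\|_{\Lup^1(\mathbb S^1)}$; and averaging over the second point, using that $V$ has mean zero, gives the sup bound. One small inconsistency is harmless: your opening remark that continuity is the only place mean zero is used is contradicted, correctly, by your later two-arc step, which also uses it.
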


\begin{proof}
Since $\Omega$ has mean zero, the function $\theta\mapsto\int_0^\theta\Omega(e^{it})\dint t$ is $2\pi$-periodic, so $V_0$, and hence $V$, is a well-defined continuous function on $\mathbb S^1$. By construction, $V$ has mean zero. Given $0\le\theta<\psi\le2\pi$,
\[
V(e^{i\psi})-V(e^{i\theta})=\int_\theta^\psi\Omega(e^{it})\dint t .
\]
Since $\int_{\mathbb S^1}\Omega=0$, we also have
\[
V(e^{i\psi})-V(e^{i\theta})=-\int_{[0,2\pi]\backslash(\theta,\psi)}\Omega(e^{it})\dint t .
\]
Set $a=\int_\theta^\psi|\Omega(e^{it})|\dint t$ and
$b=\int_{[0,2\pi]\backslash(\theta,\psi)}|\Omega(e^{it})|\dint t$, so that
$a+b=\|\Omega\|_{\Lup^1(\mathbb S^1)}$. The two identities give
\[
|V(e^{i\psi})-V(e^{i\theta})|\le\min\{a,b\}\le\frac{a+b}{2}
=\frac12\|\Omega\|_{\Lup^1(\mathbb S^1)} .
\]
By symmetry, this holds for all $\theta,\psi\in[0,2\pi]$. Finally, since $V$ has
mean zero,
\[
|V(e^{i\theta})|
=\Bigl|\frac1{2\pi}\int_0^{2\pi}\bigl(V(e^{i\theta})-V(e^{i\psi})\bigr)\dint\psi\Bigr|
\le\frac12\|\Omega\|_{\Lup^1(\mathbb S^1)},\qquad 0\le\theta\le2\pi. \qedhere
\]
\end{proof}

\begin{lemma}\label{dbarG}
Let $\Omega(\zeta)$, $|\zeta|=1$, be a trigonometric polynomial with mean zero, let $V$ be defined by \eqref{defV}, and set
$$G(w)=-2i\,\frac{V(w/|w|)}{w},\qquad w\in\C\backslash\{0\}.$$
Then $G\in\Con^\infty(\C\backslash\{0\})$ and
$$\bar{\partial}G(w)=\frac{\Omega(w/|w|)}{|w|^2},\qquad w\in\C\backslash\{0\}.$$
\end{lemma}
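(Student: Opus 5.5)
Since $\Omega$ is a trigonometric polynomial with mean zero, we write $\Omega(e^{i\theta})=\sum_{0<|k|\le n}c_ke^{ik\theta}$. The primitive is then explicit: $V_0(e^{i\theta})=\sum_{k}\frac{c_k}{ik}(e^{ik\theta}-1)$. Subtracting the mean as in \eqref{defV} removes the constant, so
\[
V(\zeta)=\sum_{0<|k|\le n}\frac{c_k}{ik}\,\zeta^k,\qquad |\zeta|=1 .
\]
Thus $V$ is a trigonometric polynomial, and $V(w/|w|)$ is smooth on $\C\setminus\{0\}$ because $w\mapsto w/|w|$ is smooth there. Dividing by $w$ preserves smoothness, so $G\in\Con^\infty(\C\setminus\{0\})$.

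For the $\bar\partial$ identity, we compute term by term. Write $(w/|w|)^k=w^k|w|^{-k}$, so that each term of $G$ is a multiple of $w^{k-1}|w|^{-k}$. Since $w^{k-1}$ is holomorphic on $\C\setminus\{0\}$ (also for negative $k$), the product rule and \eqref{dbarpower} with $s=-k$ give
\[
\bar\partial\bigl(w^{k-1}|w|^{-k}\bigr)=w^{k-1}\cdot\tfrac{-k}{2}\,w\,|w|^{-k-2}=-\tfrac{k}{2}\,\frac{w^k|w|^{-k}}{|w|^2}.
\]
Therefore
\[
\bar\partial G(w)=-2i\sum_k\frac{c_k}{ik}\Bigl(-\frac k2\Bigr)\frac{(w/|w|)^k}{|w|^2}=\sum_k c_k\frac{(w/|w|)^k}{|w|^2}=\frac{\Omega(w/|w|)}{|w|^2}.
\]

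As a coordinate-free check, we can use polar coordinates. Writing $w=re^{i\theta}$, one has $\bar\partial=\frac{e^{i\theta}}{2}\bigl(\partial_r+\frac ir\partial_\theta\bigr)$. Applying this to $G=-2i\,r^{-1}e^{-i\theta}V(e^{i\theta})$, the $\partial_r$ contribution and the $\partial_\theta$ acting on $e^{-i\theta}$ cancel. What remains is $\frac{e^{i\theta}}{2}\cdot\frac ir\cdot(-2i)r^{-1}e^{-i\theta}V'(\theta)=V'(\theta)/r^2=\Omega(e^{i\theta})/r^2$.

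The main care point is the sign and normalization bookkeeping, namely the factor $-2i$, the convention $\bar\partial=\frac12(\partial_x+i\partial_y)$, and the use of \eqref{dbarpower} with negative exponents. The term-by-term computation handles all of these mechanically.
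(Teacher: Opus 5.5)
Your proof is correct and follows the paper's argument: by linearity reduce to the modes $\zeta^k$ with $k\neq 0$, note that $V(\zeta)=\zeta^k/(ik)$, and apply \eqref{dbarpower} to $w^{k-1}|w|^{-k}$. Your polar-coordinate check is also correct, and it shows the $\bar\partial$ identity whenever $V$ is $C^1$ on $\mathbb S^1$, not only for trigonometric polynomials.
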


\begin{proof}
Since $V$ and $G$ depend linearly on $\Omega$, it suffices to consider $\Omega(\zeta)=\zeta^k$ for $k\in\Z\backslash\{0\}$. Then
$$V(\zeta)=\frac{\zeta^k}{ik},\qquad|\zeta|=1,$$
since $\frac{d}{d\theta}\frac{e^{ik\theta}}{ik}=e^{ik\theta}$ and $\zeta^k$ has mean zero. For $w\in\C\backslash\{0\}$ we have
$$G(w)=-2i\frac{V(w/|w|)}{w}=-\frac{2}{k}w^{k-1}|w|^{-k},$$
which is smooth on $\C\backslash\{0\}$. Since $w^{k-1}$ is holomorphic there, \eqref{dbarpower} gives
$$\bar{\partial}G(w)=-\frac2kw^{k-1}\Big(-\frac{k}{2}\Big)w|w|^{-k-2}=w^k|w|^{-k-2}=\frac{(w/|w|)^k}{|w|^{2}}=\frac{\Omega(w/|w|)}{|w|^2}. \qedhere$$
\end{proof}

\begin{lemma}\label{truncid}
Let $\Omega\in\Lup^1(\mathbb S^1)$ have mean zero, and let $V$ be defined by \eqref{defV}. For every $\ep>0$, $f\in\Con_c^\infty(\C)$, and $z\in\C$ we have
\begin{multline}\label{greensoper}
\int_{|w|>\ep}\frac{\Omega(w/|w|)}{|w|^2}f(z-w)\dint A(w)=-2i\int_{|w|>\ep}\frac{V(w/|w|)}{w}\,\bar{\partial}f(z-w)\dint A(w)\\
+i\int_0^{2\pi}V(e^{i\theta})f(z-\ep e^{i\theta})\dint\theta.
\end{multline}
\end{lemma}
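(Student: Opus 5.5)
The plan is to prove \eqref{greensoper} first for mean-zero trigonometric polynomials $\Omega$, where Lemma~\ref{dbarG} applies directly, and then pass to general $\Omega\in\Lup^1(\mathbb S^1)$ through the Fej\'er means $\sigma_n\Omega$. All three terms in \eqref{greensoper} depend linearly on $\Omega$, and so does $V$.

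\emph{Polynomial case.} Fix $\ep>0$, $z$, and $R$ so large that $f(z-w)=0$ for $|w|\geq R$. Set $D=\{\ep\le|w|\le R\}$ and $u(w)=G(w)f(z-w)$, with $G=-2iV(w/|w|)/w$ as in Lemma~\ref{dbarG}. Then $u\in\Con^1(D)$ and $u$ vanishes on $|w|=R$.

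The product rule, together with $\bar\partial_w[f(z-w)]=-(\bar\partial f)(z-w)$, gives
\[
\bar\partial u(w)=\frac{\Omega(w/|w|)}{|w|^2}f(z-w)-G(w)\,\bar\partial f(z-w).
\]
Integrating over $D$ and applying \eqref{Greens}, the left side equals
\[
-\frac{\ep}{2}\int_0^{2\pi}e^{i\theta}\,G(\ep e^{i\theta})f(z-\ep e^{i\theta})\dint\theta.
\]
Since $G(\ep e^{i\theta})=-2iV(e^{i\theta})/(\ep e^{i\theta})$, this boundary term simplifies to $i\int_0^{2\pi}V(e^{i\theta})f(z-\ep e^{i\theta})\dint\theta$. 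Moving $-\int_D G\,\bar\partial f(z-\cdot)$ to the other side gives exactly \eqref{greensoper}. The only care needed here is the sign bookkeeping of the chain rule under $w\mapsto z-w$ and the orientation already built into \eqref{Greens}.

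\emph{General case.} Let $\Omega_n=\sigma_n\Omega$. These are mean-zero trigonometric polynomials with $\Omega_n\to\Omega$ in $\Lup^1(\mathbb S^1)$. Let $V_n$ be the function obtained from $\Omega_n$ via \eqref{defV}. By linearity and Lemma~\ref{primitiveV} applied to $\Omega_n-\Omega$,
\[
\|V_n-V\|_{\Lup^\infty}\le\tfrac12\|\Omega_n-\Omega\|_{\Lup^1}\to0.
\]
The passage to the limit in each term then goes as follows.
\begin{enumerate}
\item In polar coordinates the left side is $\int_{\mathbb S^1}\Omega_n u_\ep$, where $u_\ep$ is bounded because $\ep>0$ and $f$ is bounded with compact support. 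It therefore converges by $\Lup^1$ convergence of $\Omega_n$.
\item The boundary term converges by uniform convergence of $V_n$.
\item The middle term is bounded by $2\|V_n-V\|_\infty\int_{\ep<|w|<R}|w|^{-1}|\bar\partial f(z-w)|\dint A$, which tends to $0$.
\end{enumerate}
Hence the identity passes to the limit. I expect the main (though mild) obstacle to be this limiting step, namely making sure that every integral is finite at fixed $\ep$. That holds because the region $|w|>\ep$ stays away from the singularity and $f$ has compact support.
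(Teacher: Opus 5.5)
Your proposal is correct and is essentially the paper's proof. You apply Green's formula \eqref{Greens} on the annulus to $u=G\,f(z-\cdot)$ for mean-zero trigonometric polynomials via Lemma~\ref{dbarG}, then pass to general $\Omega$ with Fej\'er means, using the bound $\|V_n-V\|_{\Lup^\infty(\mathbb S^1)}\le\frac12\|\Omega_n-\Omega\|_{\Lup^1(\mathbb S^1)}$ from Lemma~\ref{primitiveV} to take the limit in each of the three terms at fixed $\ep$. The only cosmetic differences are that you keep $z$ general instead of translating to $z=0$, and that you phrase the convergence of the left-hand side through the angular function $u_\ep$, which is bounded by $\|f\|_{\Lup^\infty}\log(R/\ep)$; this is the same estimate the paper writes out.
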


\begin{proof}
Replacing $f$ by $f(\cdot+z)$, it suffices to prove \eqref{greensoper} for $z=0$. Fix $f\in\Con_c^\infty(\C)$ and $\ep>0$, and choose $R>\ep$ so that $f(-w)=0$ for $|w|\geq R$.

\emph{Trigonometric polynomials.} Suppose first that $\Omega$ is a trigonometric polynomial with mean zero, and let $G$ be as in Lemma~\ref{dbarG}. Then
$$\int_{|w|>\ep}\frac{\Omega(w/|w|)}{|w|^2}f(-w)\dint A(w)=\int_{\ep<|w|<R}\bar{\partial}G(w)f(-w)\dint A(w).$$
Since $\bar{\partial}\big(G(w)f(-w)\big)=(\bar{\partial}G)(w)f(-w)-G(w)(\bar{\partial}f)(-w)$, we have
\begin{multline*}
\int_{|w|>\ep}\frac{\Omega(w/|w|)}{|w|^2}f(-w)\dint A(w)=\int_{\ep<|w|<R}\bar{\partial}\big(G(w)f(-w)\big)\dint A(w)\\
+\int_{|w|>\ep}G(w)(\bar{\partial}f)(-w)\dint A(w).
\end{multline*}
The function $u(w)=G(w)f(-w)$ is $\Con^1$ on $\{\ep\le|w|\le R\}$ and vanishes on $\{|w|=R\}$. Since $e^{i\theta}G(\ep e^{i\theta})=-2iV(e^{i\theta})/\ep$, Green's formula \eqref{Greens} gives
\begin{multline*}
\int_{\ep<|w|<R}\bar{\partial}\big(G(w)f(-w)\big)\dint A(w)=-\frac{\ep}{2}\int_0^{2\pi}e^{i\theta}G(\ep e^{i\theta})f(-\ep e^{i\theta})\dint\theta\\
=i\int_0^{2\pi}V(e^{i\theta})f(-\ep e^{i\theta})\dint\theta.
\end{multline*}
This shows that \eqref{greensoper} holds when $\Omega$ is a trigonometric polynomial.

\emph{General $\Omega$.} For general $\Omega\in\Lup^1(\mathbb S^1)$ with mean zero, let $\Omega_n=\sigma_n\Omega$ be the Fej\'er means of $\Omega$, so that each $\Omega_n$ is a trigonometric polynomial with mean zero and $\Omega_n\ra\Omega$ in $\Lup^1(\mathbb S^1)$. Let $V_n$ be the primitive of $\Omega_n$ defined by \eqref{defV}. Then $V_n-V$ is the primitive of $\Omega_n-\Omega$ also given by \eqref{defV}, and hence, by Lemma~\ref{primitiveV},
$$\|V_n-V\|_{\Lup^\infty(\mathbb S^1)}\leq\frac12\|\Omega_n-\Omega\|_{\Lup^1(\mathbb S^1)}\ra 0,\qquad n\ra\infty.$$
Since \eqref{greensoper} holds for each $\Omega_n$, it suffices to show that each term in \eqref{greensoper} for $\Omega_n$ converges to the corresponding term for $\Omega$. For the first term,
\begin{align*}
\lefteqn{\left|\int_{|w|>\ep}\frac{\Omega_n(w/|w|)}{|w|^2}f(-w)\dint A(w)-\int_{|w|>\ep}\frac{\Omega(w/|w|)}{|w|^2}f(-w)\dint A(w)\right|}\\
&\qquad\qquad\leq\int_{\ep<|w|<R}\frac{|\Omega_n(w/|w|)-\Omega(w/|w|)|}{|w|^2}|f(-w)|\dint A(w)\\
&\qquad\qquad\leq\|\Omega_n-\Omega\|_{\Lup^1(\mathbb S^1)}\|f\|_{\Lup^\infty(\C)}\int_\ep^R\frac{1}{\rho}\dint\rho\\
&\qquad\qquad=\|\Omega_n-\Omega\|_{\Lup^1(\mathbb S^1)}\|f\|_{\Lup^\infty(\C)}\log(R/\ep)\ra0,\qquad n\ra\infty.
\end{align*}
For the second term,
\begin{align*}
\lefteqn{\left|\int_{|w|>\ep}\frac{V_n(w/|w|)}{w}\bar{\partial}f(-w)\dint A(w)-\int_{|w|>\ep}\frac{V(w/|w|)}{w}\bar{\partial}f(-w)\dint A(w)\right|}\\
&\qquad\qquad\leq\int_{\ep<|w|<R}\frac{|V_n(w/|w|)-V(w/|w|)|}{|w|}|\bar{\partial}f(-w)|\dint A(w)\\
&\qquad\qquad\leq\|V_n-V\|_{\Lup^\infty(\mathbb S^1)}\|\bar{\partial}f\|_{\Lup^\infty(\C)}\int_{\ep<|w|<R}\frac{\dint A(w)}{|w|}\\
&\qquad\qquad=2\pi(R-\ep)\|V_n-V\|_{\Lup^\infty(\mathbb S^1)}\|\bar{\partial}f\|_{\Lup^\infty(\C)}\ra0,\qquad n\ra\infty.
\end{align*}
Finally, for the last term,
\begin{align*}
\lefteqn{\left|\int_0^{2\pi}V_n(e^{i\theta})f(-\ep e^{i\theta})\dint\theta-\int_0^{2\pi}V(e^{i\theta})f(-\ep e^{i\theta})\dint\theta\right|}\\
&\qquad\qquad\leq\int_0^{2\pi}|V_n(e^{i\theta})-V(e^{i\theta})||f(-\ep e^{i\theta})|\dint\theta\\
&\qquad\qquad\leq2\pi\|V_n-V\|_{\Lup^\infty(\mathbb S^1)}\|f\|_{\Lup^\infty(\C)}\ra0,\qquad n\ra\infty.
\end{align*}
It follows that \eqref{greensoper} holds for general $\Omega \in \Lup^1(\mathbb S^1)$.
\end{proof}

\begin{proposition}\label{twistCauchyprop}
Let $\Omega\in\Lup^1(\mathbb S^1)$ have mean zero, and let $V$ be defined by \eqref{defV}. For every $f\in\Con_c^\infty(\C)$ and $z\in\C$, the limit defining $\T_\Omega f(z)$ exists, and
\begin{equation}\label{twistCauchy}
\T_\Omega f(z)=-2i\int_{\C}\frac{V(w/|w|)}{w}\,\bar{\partial}f(z-w)\dint A(w)=2i\int_{\C}\frac{V(w/|w|)}{\bar w}\,\partial f(z-w)\dint A(w).
\end{equation}
Consequently, the estimate \eqref{newptwiseroughplane} holds, and Theorem~\ref{mainthmplane} follows.
\end{proposition}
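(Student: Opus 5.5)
The plan is to pass to the limit $\ep\to0$ in the truncated identity of Lemma~\ref{truncid}, then obtain the second formula by conjugation, and finally read off \eqref{newptwiseroughplane} from the bound on $V$ in Lemma~\ref{primitiveV}.

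\emph{First formula.} Fix $z$ and $f\in\Con_c^\infty(\C)$. In \eqref{greensoper}, the first term on the right converges as $\ep\to0$ to $-2i\int_\C \frac{V(w/|w|)}{w}\bar\partial f(z-w)\dint A(w)$. This follows from dominated convergence, since $|V(w/|w|)/w|\le \frac12\|\Omega\|_{\Lup^1}|w|^{-1}$ is locally integrable in the plane and $\bar\partial f$ is bounded with compact support.

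For the boundary term, I use that $V$ has mean zero to write
\[
i\int_0^{2\pi}V(e^{i\theta})f(z-\ep e^{i\theta})\dint\theta=i\int_0^{2\pi}V(e^{i\theta})\big(f(z-\ep e^{i\theta})-f(z)\big)\dint\theta .
\]
This is bounded by $2\pi\|V\|_\infty\|\nabla f\|_\infty\,\ep\to0$. Hence the truncations converge, so the principal value exists, and the first equality in \eqref{twistCauchy} holds.

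\emph{Second formula.} I see two routes.

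The first route is to run the same argument with $\partial$ in place of $\bar\partial$. I would use $\bar G(w)=2i\,V(w/|w|)/\bar w$, which satisfies $\partial \bar G = \Omega(w/|w|)/|w|^2$ for trigonometric polynomials. This is checked on $\Omega=\zeta^k$, where $V=\zeta^k/(ik)$ and $\partial(\bar w^{-1}w^{k}|w|^{-k})$ is computed via $\partial|w|^s=\frac s2\bar w|w|^{s-2}$. I would then use the conjugate Green formula $\int_D\partial u\,\dint A=-\frac{1}{2i}\oint_{\partial D} u\,\dint\bar z$, followed by the same Fej\'er approximation and the same vanishing of the boundary term.

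The second, more economical route is to apply the first formula to $\bar f$ with kernel $\bar\Omega$ and take complex conjugates. Since $\overline{\bar\partial\bar f}=\partial f$, $\T_{\bar\Omega}\bar f=\overline{\T_\Omega f}$, and the primitive of $\bar\Omega$ is $\bar V$, this gives
\[
\overline{\T_\Omega f(z)}=-2i\int\frac{\bar V}{w}\,\overline{\partial f}(z-w)\dint A .
\]
Conjugating yields exactly $2i\int\frac{V}{\bar w}\partial f(z-w)\dint A$. I will take this route, since it only needs the sign bookkeeping to be checked.

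\emph{Consequence.} From either formula,
\[
|\T_\Omega f(z)|\le 2\|V\|_\infty\int_\C\frac{|\bar\partial f(z-w)|}{|w|}\dint A(w)\le\|\Omega\|_{\Lup^1(\mathbb S^1)}\,\Int_1(|\bar\partial f|)(z),
\]
using $\|V\|_\infty\le\frac12\|\Omega\|_{\Lup^1}$ from Lemma~\ref{primitiveV}. The same bound holds with $\partial f$ in place of $\bar\partial f$, and taking the minimum of the two gives \eqref{newptwiseroughplane}.

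The main obstacle is the boundary term, which is where the mean-zero property of $V$ enters. Everything else is routine dominated convergence and conjugation bookkeeping.
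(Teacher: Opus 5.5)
Your proposal is correct and follows the paper's proof essentially step for step. The main steps match: you let $\ep\to0$ in \eqref{greensoper}, using the local integrability of $|w|^{-1}$ for the main term and the mean-zero property of $V$ to make the boundary term $O(\ep)$. You then get the $\partial$-formula by applying the first identity to $\bar\Omega$ and $\bar f$ and conjugating, which is the route the paper takes, and you conclude with $\|V\|_{\Lup^\infty(\mathbb S^1)}\le\frac12\|\Omega\|_{\Lup^1(\mathbb S^1)}$.
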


\begin{proof}
We let $\ep\ra0$ in \eqref{greensoper}. Since $|V(w/|w|)/w|\le\|V\|_{\Lup^\infty(\mathbb S^1)}|w|^{-1}$ and $\bar\partial f$ is bounded with compact support, the first integral in \eqref{twistCauchy} converges absolutely, and hence the first term on the right-hand side of \eqref{greensoper} converges to the first integral in \eqref{twistCauchy} as $\ep\ra0$. For the second term, since $V$ has mean zero on $\mathbb S^1$,
\begin{multline*}
\left|\int_0^{2\pi}V(e^{i\theta})f(z-\ep e^{i\theta})\dint\theta\right|=\left|\int_0^{2\pi}V(e^{i\theta})\big[f(z-\ep e^{i\theta})-f(z)\big]\dint\theta\right|\\
\leq2\pi\ep\,\|V\|_{\Lup^\infty(\mathbb S^1)}\|Df\|_{\Lup^\infty(\C)}\ra0,\qquad\ep\ra0.
\end{multline*}
Therefore the limit defining $\T_\Omega f(z)$ exists and equals the first integral in \eqref{twistCauchy}.

For the second identity, note that $\bar\Omega$ has mean zero, its primitive defined by \eqref{defV} is $\bar V$, and $\overline{\T_\Omega f}=\T_{\bar\Omega}\bar f$; applying the first identity to $\bar\Omega$ and $\bar f$, using $\bar\partial\bar f=\overline{\partial f}$, and taking complex conjugates gives the second identity. Finally, by \eqref{twistCauchy} and Lemma~\ref{primitiveV},
\[
|\T_\Omega f(z)|\leq 2\|V\|_{\Lup^\infty(\mathbb S^1)}\int_{\C}\frac{|\bar{\partial}f(z-w)|}{|w|}\dint A(w)\leq\|\Omega\|_{\Lup^1(\mathbb S^1)}\Int_1(|\bar\partial f|)(z),
\]
and in the same way $|\T_\Omega f(z)|\leq\|\Omega\|_{\Lup^1(\mathbb S^1)}\Int_1(|\partial f|)(z)$. This proves \eqref{newptwiseroughplane}.
\end{proof}

\begin{proof}[Proof of Corollary~\ref{boundedplane}]
In the plane, the Hardy--Littlewood--Sobolev inequality states that $\Int_1:\Lup^p(\C)\ra\Lup^{\frac{2p}{2-p}}(\C)$ is bounded for $1<p<2$. Combined with \eqref{newptwiseroughplane} and $|\bar\partial f|\leq|Df|$, this gives
\[
\|\T_\Omega f\|_{\Lup^{\frac{2p}{2-p}}(\C)}\leq C_p\|\Omega\|_{\Lup^1(\mathbb S^1)}\|\bar\partial f\|_{\Lup^p(\C)}\leq C_p\|\Omega\|_{\Lup^1(\mathbb S^1)}\|Df\|_{\Lup^p(\C)},\qquad f\in\Con_c^\infty(\C).
\]
Since $\Con_c^\infty(\C)$ is dense in $\dot\W^{1,p}(\C)$, $\T_\Omega$ extends uniquely to a bounded operator $\dot\W^{1,p}(\C)\ra\Lup^{\frac{2p}{2-p}}(\C)$.
\end{proof}

For the endpoint results we record some background on the spaces. Let $\dot\W^{1,1}_{\bar\partial}(\C)$ and $\dot\W^{1,1}_{\partial}(\C)$ be the completions of $\Con_c^\infty(\C)$ with respect to $\|\bar\partial f\|_{\Lup^1(\C)}$ and $\|\partial f\|_{\Lup^1(\C)}$. These are norms on $\Con_c^\infty(\C)$: if $\bar\partial f\equiv0$ (respectively $\partial f\equiv0$), then $f$ (respectively $\bar f$) is entire with compact support, hence $f\equiv0$. By the Cauchy--Pompeiu formula and the weak-type Hardy--Littlewood--Sobolev inequality,
\begin{equation}\label{onesidedemb}
\|f\|_{\Lup^{2,\infty}(\C)}\leq C\min\big\{\|\bar\partial f\|_{\Lup^1(\C)},\|\partial f\|_{\Lup^1(\C)}\big\},\qquad f\in\Con_c^\infty(\C),
\end{equation}
so both spaces embed continuously into $\Lup^{2,\infty}(\C)$, and their elements are functions whose distributional derivative $\bar\partial f$, respectively $\partial f$, belongs to $\Lup^1(\C)$. In particular, their sum is a Banach space with the norm
\[
\|f\|=\inf\Big\{\|\bar\partial g\|_{\Lup^1(\C)}+\|\partial h\|_{\Lup^1(\C)}:\ f=g+h,\ g\in\dot\W^{1,1}_{\bar\partial}(\C),\ h\in\dot\W^{1,1}_{\partial}(\C)\Big\}.
\]
Since $\|Df\|_{\Lup^1(\C)}=\|\bar\partial f\|_{\Lup^1(\C)}+\|\partial f\|_{\Lup^1(\C)}$, a standard mollification and cutoff argument gives $\dot\W^{1,1}(\C)=\dot\W^{1,1}_{\bar\partial}(\C)\cap\dot\W^{1,1}_{\partial}(\C)$. Finally, since $\bar\partial\bar f=\overline{\partial f}$, complex conjugation maps $\dot\W^{1,1}_{\bar\partial}(\C)$ isometrically onto $\dot\W^{1,1}_{\partial}(\C)$.

Neither of $\dot\W^{1,1}_{\bar\partial}(\C)$ and $\dot\W^{1,1}_{\partial}(\C)$ contains the other. For $0<\ep<\frac18$, let $f_\ep(z)=\chi_\ep(|z|)/z$, where $\chi_\ep\in\Con_c^\infty((\ep,\frac12))$ satisfies $0\leq\chi_\ep\leq1$, $\chi_\ep=1$ on $[2\ep,\frac14]$, and $\int_0^\infty|\chi_\ep'|=2$. Since $1/z$ is holomorphic away from the origin,
\[
\|\bar\partial f_\ep\|_{\Lup^1(\C)}=\pi\int_0^\infty|\chi_\ep'(r)|\dint r=2\pi,
\qquad
\|\partial f_\ep\|_{\Lup^1(\C)}\geq\int_{2\ep<|z|<\frac14}\frac{\dint A(z)}{|z|^{2}}=2\pi\log\frac1{8\ep}.
\]
If $\dot\W^{1,1}_{\bar\partial}(\C)\subseteq\dot\W^{1,1}_{\partial}(\C)$, the inclusion would have closed graph, since both spaces embed continuously in $\Lup^{2,\infty}(\C)$. By the closed graph theorem it would be bounded, giving $\|\partial f\|_{\Lup^1(\C)}\leq C\|\bar\partial f\|_{\Lup^1(\C)}$ for $f\in\Con_c^\infty(\C)$, which the functions $f_\ep$ rule out. By conjugation, $\dot\W^{1,1}_{\partial}(\C)\not\subseteq\dot\W^{1,1}_{\bar\partial}(\C)$ as well. Consequently,
\[
\dot\W^{1,1}(\C)\subsetneq\dot\W^{1,1}_{\bar\partial}(\C) \dot\W^{1,1}_{\partial}(\C) \subsetneq\dot\W^{1,1}_{\bar\partial}(\C)+\dot\W^{1,1}_{\partial}(\C).
\]

\begin{proof}[Proof of Corollary~\ref{boundedplaneweak}]
The endpoint Hardy--Littlewood--Sobolev inequality states that $\Int_1:\Lup^1(\C)\ra\Lup^{2,\infty}(\C)$ is bounded. Combined with \eqref{newptwiseroughplane}, this gives \eqref{weakSob}. Since $\Lup^{2,\infty}(\C)$ is complete, \eqref{weakSob} and density show that $\T_\Omega$ extends uniquely to bounded operators $\dot\W^{1,1}_{\bar\partial}(\C)\ra\Lup^{2,\infty}(\C)$ and $\dot\W^{1,1}_{\partial}(\C)\ra\Lup^{2,\infty}(\C)$. These two extensions agree on $\dot\W^{1,1}_{\bar\partial}(\C)\cap\dot\W^{1,1}_{\partial}(\C)=\dot\W^{1,1}(\C)$, since $\Con_c^\infty(\C)$ is dense in $\dot\W^{1,1}(\C)$ and convergence in $\dot\W^{1,1}(\C)$ implies convergence in both $\|\bar\partial f\|_{\Lup^1(\C)}$ and $\|\partial f\|_{\Lup^1(\C)}$. Hence $\T_\Omega(g+h)=\T_\Omega g+\T_\Omega h$ is well defined for $g\in\dot\W^{1,1}_{\bar\partial}(\C)$ and $h\in\dot\W^{1,1}_{\partial}(\C)$, and
\begin{multline*}
\|\T_\Omega(g+h)\|_{\Lup^{2,\infty}(\C)}\leq2\big(\|\T_\Omega g\|_{\Lup^{2,\infty}(\C)}+\|\T_\Omega h\|_{\Lup^{2,\infty}(\C)}\big)\\ \leq C\|\Omega\|_{\Lup^1(\mathbb S^1)}\big(\|\bar\partial g\|_{\Lup^1(\C)}+\|\partial h\|_{\Lup^1(\C)}\big).
\end{multline*}
Taking the infimum over all such decompositions $f=g+h$ gives the bounded extension $\T_\Omega:\dot\W^{1,1}_{\bar\partial}(\C)+\dot\W^{1,1}_{\partial}(\C)\ra\Lup^{2,\infty}(\C)$.
\end{proof}

\subsection*{Use of AI tools}
The authors used Claude Opus 5.5 (Anthropic) to explore examples related to the critical Lebesgue exponent $r=d$. The examples were suggested entirely by the AI. When these indicated that the sharp exponent should be $r=d-1$, we turned to the Poincar\'e inequality on the unit sphere and to the space $\dot \W^{-1,\infty}(\Sp)$. The approach to the planar case via complex analysis was proposed by the authors, but the AI assisted with the details and exposition of the planar argument in Section~\ref{plane}. The authors have independently verified, validated, and rewritten all parts of the paper that were influenced by LLM-generated material, and they take full responsibility for the mathematical content of the paper.

\end{document}